\newtheorem{thm}{Theorem}[section]
\newtheorem{rem}{Remark}[section]
\newtheorem{lem}{Lemma}[section]
\newtheorem{cor}{Corollary}[section]
\newtheorem{question}{Question}[section]
\newtheorem*{prop*}{Proposition}
\newtheorem*{thm*}{Theorem}
\newtheorem*{rem*}{Remark}
\newtheorem*{imprem*}{Important Remark}
\newtheorem*{lem*}{Lemma}
\newtheorem*{dfn*}{Definition}
\newtheorem*{cor*}{Corollary}
\newtheorem*{probs*}{Problems}
\newtheorem*{prob*}{Problem}
\newtheorem*{problem*}{Problem}
\newtheorem*{ex*}{Example}
\newtheorem*{conj*}{Conjecture}
\newtheorem*{state*}{Statement}
\newtheorem*{question*}{Question}
\def\NN{\mathbb{N}}
\def\RR{\mathbb{R}}
\def\QQ{\mathbb{Q}}
\def\NN{\mathbb{N}}
\def\CC{\mathbb{C}}
\def\ep{\varepsilon}
\def\cO{\mathcal{O}}
\definecolor{vio}{RGB}{118, 120, 238}
\definecolor{identifiercolor}{rgb}{.4,.6,.56}
\definecolor{stringcolor}{gray}{0.5}
\definecolor{inactivecolor}{rgb}{0.15,0.15,0.5}
\begin{document}

\title[On Lenstra's criterion for norm-Euclideanity of number fields]{On Lenstra's criterion for norm-Euclideanity of number fields and properties of Dedekind zeta-functions}

\author{Jordan Pertile and Valeriia V. Starichkova}
\address{School of Science, The University of New South Wales, Canberra, Australia}
\email{j.pertile@unsw.edu.au}
\address{School of Science, The University of New South Wales, Canberra, Australia}
\email{v.starichkova@unsw.edu.au}
\date\today
\thanks{Supported by ARC Discovery Project DP240100186 and the Australian Mathematical Society Lift-off Fellowship of the second author.}
\keywords{Norm-Euclidean number fields, bounds for Dedekind zeta-functions, sphere packing, Rogers' constant}

\begin{abstract}
In 1977, Lenstra provided a criterion for norm-Euclideanity of number fields and noted that this criterion becomes ineffective for number fields of large enough degrees under the Generalised Riemann Hypothesis (GRH) for the Dedekind zeta-functions. In the first part of the paper we make Lenstra’s observation explicit by proving that, under GRH, the criterion becomes ineffective for all number fields of degree $n \geq 62238$. This follows from combining the criterion assumption with the explicit lower bound for the discriminant of $K$ under GRH, and the (trivial) upper bound for the minimal proper ideal norm in $\cO_K$.

Unconditionally, the lower bound for the discriminant is too weak to lead to such a contradiction. However, we show that GRH can be replaced by another condition on the Dedekind zeta functions $\zeta_K$, a conjectural lower bound for $\zeta_K$ at a point to the right of $s = 1$. Combined with Zimmert's approach, this condition implies a different type of upper bound for the minimal proper ideal norm and again contradicts Lenstra's criterion for all $n$ large enough. The advantage of the new potential condition on $\zeta_K$ is that it can be computationally checked for number fields of not too large degrees.
\end{abstract}

\maketitle
\section{Introduction} \label{sec: intro}
A commutative ring $R$ is called \emph{Euclidean} if there exists a function $f: R \to \NN \cup \{0\}$ such that:
\begin{itemize}
    \item $f(x) = 0 \Leftrightarrow x = 0$,
    \item for all $a, b \in R$ there exist $q, r \in R$ such that $a = bq + r$ and $f(r) < f(b)$.
\end{itemize}

The Euclideanity of a ring is an important property implying other useful properties, such as being a principal ideal domain (PID) and a unique factorisation domain. In particular, the Euclideanity of algebraic rings of integers remains an important topic with many open questions, see the survey \cite{Lemmermeyer1995} and its updated version from 2004 \cite{Lemmermeyer2004} for the main progress and conjectures in the area. In the case of number fields, there is a natural choice of the function $f$, against which the Euclideanity of $\cO_K$ may be tested --- the norm of $K$ (see Section \ref{subsec: Lenstra Criterion - Setting} for details). We will call $K$ and $\cO_K$ \emph{norm-Euclidean} if $\cO_K$ is Euclidean with respect to the norm of $K$. We recall several significant results in the area:
\begin{enumerate}
    \item There are finitely many quadratic norm-Euclidean number fields which are fully classified. Moreover, all Euclidean quadratic complex number fields are also norm-Euclidean, but this is not true for the real number fields \cite{Harper2004}.
    \item The cyclotomic Euclidean number fields are fully classified \cite{Harper2004}. Most of them are proven to be also norm-Euclidean, however, Lenstra \cite{Lenstra1974} showed that $\QQ[e^{2\pi i/32}]$ is Euclidean and not norm-Euclidean.
    \item Under the Generalised Riemann Hypothesis (GRH), Weinberger \cite{Weinberger1973} showed that $\cO_K$ with an infinite group of units is a PID if and only if $\cO_K$ is Euclidean; in particular, real quadratic number fields are PIDs if and only if they are Euclidean under GRH. Combining this result with the Cohen--Lenstra heuristics \cite{ConehLenstraHeuristics} suggests that there should be infinitely many real quadratic Euclidean number fields.
    \item Heilbronn \cite{Heilbronn1951} proved that for every prime $\ell$, there are only finitely many norm-Euclidean Galois fields of degree $\ell$, and McGown \cite{McGown2012} provided the upper bounds for discriminants of such fields with $\ell > 2$.
\end{enumerate}

In \cite{Lenstra1977Euclidean}, Lenstra introduced a criterion for the norm-Euclideanity of number fields. We will state the criterion in its general form in Section \ref{subsec: Lenstra Criterion - Setting}, but cite one of its corollaries below.

\begin{thm}{\cite[(1.11)]{Lenstra1977Euclidean}} \label{thm: Lenstra-criterion-sigma-n}
Let $K$ be a number field of degree $n$ with discriminant $\Delta$, and let $M$ be the number of ``exceptional units'' (see \S \ref{subsec: Lenstra Criterion - Setting}). Denote by $\sigma_n$ the Rogers constant (\cite{Rogers1958} and \S \ref{sec: sigma-n}) and $\Gamma$ denote the Gamma function. If
\begin{equation} \label{eq: Lenstra-particular-sphere}
    M > \sigma_n \cdot \Gamma\left( 1 + \frac{n}{2} \right) \cdot \left( \frac{4}{\pi n} \right)^{n/2} \cdot \sqrt{|\Delta|},
\end{equation}
then $K$ is norm-Euclidean.
\end{thm}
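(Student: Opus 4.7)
The plan is to deduce this corollary from the general Lenstra criterion (Theorem~\ref{thm: Lenstra-criterion}) by specialising its abstract covering condition to a Euclidean ball covering via the arithmetic--geometric-mean inequality, and then invoking Rogers' bound on lattice sphere coverings. The geometry-of-numbers framework is standard: fix the Minkowski embedding $K\otimes_\QQ\RR\hookrightarrow\RR^n$ in the normalisation for which $\cO_K$ becomes a full lattice $\Lambda\subset\RR^n$ of covolume $\sqrt{|\Delta|}$. The absolute norm then becomes a weighted product of coordinates, and AM--GM applied to the squared coordinates yields
\[
|N(x)|^{2/n}\;\leq\;\|x\|_2^2/n,
\]
so that the Euclidean ball $B_r$ of radius $r$ sits inside the norm body $\{|N|\leq (r/\sqrt{n})^n\}$.

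Next, I would apply Lenstra's criterion to the set of $M$ exceptional units $u_1,\ldots,u_M$. The criterion demands that translates of a certain unit-norm body by the cosets $u_i+\Lambda$ cover $\RR^n$; by the AM--GM inclusion above, it is enough to cover $\RR^n$ by Euclidean balls of a suitable radius $r\asymp\sqrt{n}$ centred at the periodic point set $\bigcup_{i=1}^{M}(u_i+\Lambda)$. This reduces the arithmetic problem to a purely geometric sphere-covering problem for a lattice-like point set with $M$ points per $\Lambda$-fundamental domain.

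Rogers' theorem then supplies the decisive bound: the infimum covering density of $\RR^n$ by congruent balls is $\sigma_n$, so the required covering is realisable provided
\[
\frac{M\cdot\operatorname{vol}(B_r)}{\sqrt{|\Delta|}}\;>\;\sigma_n,\qquad \operatorname{vol}(B_r)=\frac{\pi^{n/2}r^n}{\Gamma(1+n/2)}.
\]
Substituting the value of $r$ prescribed by Lenstra's criterion (this is where the halving $r=\sqrt{n}/2$ that produces the factor $4$ in the final bound enters) and rearranging gives exactly the inequality~\eqref{eq: Lenstra-particular-sphere}.

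The main obstacle is of two kinds. First, Rogers' theorem is an existence statement for a good lattice covering, whereas our periodic point set $\bigcup_i(u_i+\Lambda)$ is fixed by the arithmetic of $\cO_K$; passing between the two requires an averaging/translation argument over shifts of the $u_i$ within their cosets, so that the generic Rogers density can be achieved without admissible loss. Second, the precise constant $(4/(\pi n))^{n/2}$ (rather than the naive $(1/(\pi n))^{n/2}$ produced by $r=\sqrt{n}$) demands a careful bookkeeping of the $2^n$ factor introduced by the halving of the radius in the criterion. Both are essentially technical; the substance of the proof lies in the three ingredients AM--GM comparison, Euclidean ball volume, and Rogers' constant $\sigma_n$, combined with the discriminant-based covolume $\sqrt{|\Delta|}$.
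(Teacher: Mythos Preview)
Your proposal rests on a misidentification of both the mechanism in Lenstra's criterion and of what $\sigma_n$ is. The general criterion (Theorem~\ref{thm: Lenstra-criterion}) is a \emph{packing} statement, not a covering one: one chooses a set $U$ with $N_{\RR}(u-v)<1$ for all $u,v\in U$, and the hypothesis is $M>\delta^*(U)\sqrt{|\Delta|}$ where $\delta^*(U)=\delta(U)/\mu(U)$ is the \emph{centre-packing constant}. The Rogers constant $\sigma_n$ of \cite{Rogers1958} (defined in \S\ref{sec: sigma-n} via the simplex bound) is an \emph{upper bound for the sphere-packing density}, not an infimum covering density and not an existence theorem. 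The derivation in the paper is therefore immediate: take $U=U_2=\{\,y\in\RR^n:\sum y_j^2<n/4\,\}$, check the norm condition via AM--GM (this is the step you identified correctly, and it is why the radius is $\sqrt{n}/2$), compute $\mu(U_2)=(n/4)^{n/2}\pi^{n/2}/\Gamma(1+n/2)$, and use $\delta(U_2)\le\sigma_n$ to obtain
\[
\delta^*(U_2)\ \le\ \sigma_n\left(\frac{4}{\pi n}\right)^{n/2}\Gamma\!\left(1+\frac{n}{2}\right),
\]
which is exactly \eqref{eq: def-delta2}; plugging this into \eqref{eq: Lenstra-main} gives \eqref{eq: Lenstra-particular-sphere}.

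Because of the packing/covering swap, the ``main obstacle'' you describe is illusory. There is no averaging over shifts of the $u_i$ and no tension between an existence statement and a fixed arithmetic point set: $\sigma_n$ is a universal upper bound valid for \emph{every} sphere packing, so it bounds $\delta(U_2)$ outright. Likewise, the factor $(4/(\pi n))^{n/2}$ is not a mysterious $2^n$ bookkeeping item but simply $1/\mu(U_2)$ for the ball of radius $\sqrt{n}/2$. Your AM--GM ingredient and the role of the covolume $\sqrt{|\Delta|}$ are correct; the rest should be rewritten in the packing framework.
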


In Section \ref{subsec: Lenstra Criterion - choice of U} we show that under GRH, \eqref{eq: Lenstra-particular-sphere} implies that $n$ is bounded. Since \eqref{eq: Lenstra-particular-sphere} also implies an upper bound for the discriminant, only finitely many number fields may satisfy \eqref{eq: Lenstra-particular-sphere} by the Hermite–Minkowski theorem. This agrees with the following idea: if one expects that there are only finitely many norm-Euclidean number fields, then any criterion for the norm-Euclideanity should be effective only for finitely many number fields.

We provide an explicit bound for $n$ satisfying \eqref{eq: Lenstra-particular-sphere}.

\begin{thm} \label{thm: main}
Assume GRH. Then \eqref{eq: Lenstra-particular-sphere} fails for all number fields $K$ of degree $n \geq 62 238$.
\end{thm}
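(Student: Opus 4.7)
The plan is to derive a contradiction from \eqref{eq: Lenstra-particular-sphere} for all $n \geq 62238$ under GRH by combining three standard ingredients: a trivial upper bound on $M$ in terms of the minimal proper ideal norm in $\cO_K$, an explicit GRH-conditional Odlyzko-type lower bound on $|\Delta|$, and Stirling's formula applied to $\Gamma(1+n/2)$. For the left-hand side of \eqref{eq: Lenstra-particular-sphere}, I would use that some prime ideal of $\cO_K$ lies above the rational prime $2$ with norm at most $2^n$, so the minimal proper ideal norm is bounded by $2^n$; in Lenstra's setup this yields $M \leq 2^n$ (up to a minor adjustment) uniformly in $K$.

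For the right-hand side I would combine an Odlyzko-type GRH lower bound $|\Delta|^{1/n} \geq A(n)$, where $A(n) \to 8 \pi e^{\gamma} \approx 44.76$ as $n \to \infty$, with the Stirling approximation
\[
\Gamma(1+n/2) \left(\frac{4}{\pi n}\right)^{n/2} \;\sim\; \sqrt{\pi n} \left(\frac{2}{e \pi}\right)^{n/2}
\]
and the trivial bound $\sigma_n \geq 1$ on Rogers' constant. Together these give a lower bound on the right-hand side of \eqref{eq: Lenstra-particular-sphere} of the shape $\sqrt{\pi n} \left(2 A(n)/(e \pi)\right)^{n/2}$. Comparing with $M \leq 2^n$ and taking logarithms, the inequality to be solved becomes
\[
\frac{n}{2} \log\!\left( \frac{A(n)}{2 e \pi} \right) + \frac{1}{2} \log(\pi n) \;\geq\; 0,
\]
whose leading coefficient is strictly positive in the limit, since the GRH limit $8 \pi e^{\gamma} \approx 44.76$ comfortably exceeds $2 e \pi \approx 17.08$; this already confirms the asymptotic failure of \eqref{eq: Lenstra-particular-sphere}.

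The hard part will be the final quantitative step: pushing the threshold down to the stated value $n \geq 62238$ requires careful bookkeeping of the error terms. The explicit Odlyzko estimate for $A(n)$ converges slowly to its GRH limit, typically through corrections of order $1/\log n$, so at moderate $n$ the coefficient $\log(A(n)/(2 e \pi))$ is considerably smaller than its asymptotic value and the polynomial factors remain competitive. I would therefore use the sharpest available GRH-conditional Odlyzko bound, replace the Stirling asymptotic by explicit two-sided inequalities with controlled remainder, and incorporate any gain from a sharper estimate $\sigma_n \geq 1$ via Rogers' work whenever it helps. The precise threshold $62238$ should then emerge from a numerical optimisation over these ingredients rather than from a single closed-form inequality.
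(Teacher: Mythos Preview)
Your overall architecture --- combine $M\le 2^n$, an explicit GRH lower bound of Odlyzko--Poitou type for $|\Delta|^{1/n}$, and Stirling for $\Gamma(1+n/2)$ --- is exactly the one the paper uses. However, there is a genuine error in your treatment of the Rogers constant which invalidates the argument as written.

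You invoke ``the trivial bound $\sigma_n\ge 1$'', and your displayed lower bound $\sqrt{\pi n}\,(2A(n)/(e\pi))^{n/2}$ is obtained by simply dropping the factor $\sigma_n$ from the right-hand side of \eqref{eq: Lenstra-particular-sphere}. But $\sigma_n$ is a volume ratio $\mathrm{vol}(T)/\mathrm{vol}(S)$ with $T\subset S$, so $\sigma_n\le 1$ always; in fact $\sigma_n\sim \tfrac{n}{e}\,2^{-n/2}\to 0$ (Rogers). Since you need a \emph{lower} bound on the right-hand side of \eqref{eq: Lenstra-particular-sphere} to force a contradiction with $M\le 2^n$, you need a \emph{lower} bound on $\sigma_n$, and no trivial one is available. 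With the correct size of $\sigma_n$ inserted, the asymptotic comparison becomes $A(n)>4\pi e$ rather than your $A(n)>2\pi e$; the GRH limit $8\pi e^{\gamma}\approx 44.76$ still beats $4\pi e\approx 34.16$, but the margin is much tighter, and this is precisely what makes the explicit threshold delicate.

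This is not a bookkeeping issue that can be absorbed into the ``hard part'' you describe at the end: an explicit, effective lower bound on $\sigma_n$ is an essential ingredient, and obtaining one is nontrivial. The paper devotes Section~\ref{sec: sigma-n} to this, starting from Rogers' integral representation \eqref{eq: sigma-n-expl} and carrying out a careful stationary-phase type analysis (Lemma~\ref{lem: lower bound up to k^theta} and Corollary~\ref{cor: bound-sigma-n}) to produce a function $f(\kappa,\theta)$ with
\[
\frac{2^n n^{n/2}\sqrt{\pi}}{e^{n/2}(n+1)!}\,\Gamma\!\left(1+\tfrac{n}{2}\right)\sigma_n \;\ge\; f(\kappa,\theta),\qquad \kappa=\sqrt{n/2}.
\]
Only after feeding this explicit lower bound, together with Robbins' explicit Stirling inequalities and Poitou's explicit GRH discriminant bound \eqref{eq: lower-Poitou-explicit}, into the comparison does the numerical threshold $n\ge 62238$ emerge (with $\theta=0.1$ and $f(\sqrt{62238/2},0.1)\ge 0.484$). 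Your plan is otherwise sound, but you must replace ``$\sigma_n\ge 1$'' by a genuine explicit lower bound for $\sigma_n$ of the right order, and recognise that producing one is where most of the work lies.
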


Theorem \ref{thm: main} follows from combining \eqref{eq: Lenstra-particular-sphere} with a lower bound for discriminants of number fields under GRH (see \eqref{eq: Odlyzko-GRH-lower} below) and the upper bound $M \leq 2^n$ \cite[(2.2)]{Lenstra1977Euclidean}. The unconditional lower bound for discriminants is too weak to get such a contradiction unless the sharper bounds for $M$ are provided (see Remark \ref{rem: uncond}).

The bound $M \leq 2^n$ was obtained in \cite{Lenstra1977Euclidean} by using that $M \leq \min_{(0) \subsetneq I \subsetneq \cO_K} N(I)$ where the minimum is taken over all non-zero ideals $I \subsetneq \cO_K$. Using the argument by Zimmert \cite{Zimmert1981}, We prove the following upper bound on $\min_{(0) \subsetneq I \subsetneq \cO_K} N(I)$.

\begin{lem} \label{lem:lower-M-zeta-K}
Let $\zeta_K(s) = \sum_{I \subset \cO_K} \frac{1}{N(I)^s}$, $s > 1$, denote the Dedekind zeta-function corresponding to the number field $K$ with the degree $n$ and the discriminant $\Delta$. Let $r$ be the number of real embeddings of $K$ into $\CC$ and $s$ denote the number of pairs of complex conjugate embeddings of $K$. Let $\beta := \beta(n)$ be a positive function tending to $0$ as $n \to \infty$. Then
\begin{align*}
\log \left( \min_{(0) \subsetneq I \subsetneq \cO_K} N(I) \right) \cdot \left(1 - \frac{1}{\zeta_K(1 + \beta(n))}\right) \leq& -\frac{r}{2}(\log (4\pi) + 1 + \gamma) - s(\log (4\pi) + \gamma) \\&+ \log \sqrt{|\Delta|}
+ O(n\beta(n)) + O\left( \frac{1}{\beta(n)}\right) \quad \text{as} \quad n \to \infty,
\end{align*}
where the minimum is taken over all non-zero ideals $I \subsetneq \cO_K$.
\end{lem}

Using Lemma \ref{lem:lower-M-zeta-K}, we can replace GRH in Theorem \ref{thm: main} by an assumption on the lower bounds for Dedekind zeta functions.


\begin{thm} \label{thm: zeta-Lenstra}
Let $(K_m)_{m \in \NN}$ be a sequence of number fields such that $n_m := \deg K_m \to \infty$ as $m \to \infty$.
Assume there exists $0 < \ep < 1$ such that
\begin{equation} \label{eq: zeta-K-lower-uniform}
\liminf_{m \to \infty} \zeta_{K_m}\left( 1 + n_m^{-\ep} \right) > \frac{2\log 2}{2 \log 2 + \gamma - 1} = 1.43879\ldots.
\end{equation}
Then \eqref{eq: Lenstra-particular-sphere} fails for almost all number fields in the sequence $(K_m)_{m \in \NN}$.
\end{thm}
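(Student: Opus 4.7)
\begin{skproof}
The plan is to assume \eqref{eq: Lenstra-particular-sphere} holds for some $K = K_m$ of degree $n = n_m$ in the sequence and derive a contradiction for all sufficiently large $n_m$. The main tools are Lemma~\ref{lem:lower-M-zeta-K} (an upper bound on $\log N(I_0)$ in terms of $\log\sqrt{|\Delta|}$) together with the bounds $M \leq N(I_0) \leq 2^n$ from \cite{Lenstra1977Euclidean}.

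I specialise Lemma~\ref{lem:lower-M-zeta-K} to $\beta(n) := n^{-\ep}$. Writing $\theta := 1 - 1/\zeta_K(1 + n^{-\ep})$ and using $r + 2s = n$, the linear combination of $r$ and $s$ on the right-hand side of the lemma collapses to $-\tfrac{n}{2}(\log(4\pi)+\gamma) - r/2$; dropping the non-positive term $-r/2$ yields
\[
\theta\,\log N(I_0) \;\leq\; \log\sqrt{|\Delta|} - \tfrac{n}{2}\bigl(\log(4\pi) + \gamma\bigr) + O(n^{1-\ep}) + O(n^{\ep}).
\]
On the other hand, if \eqref{eq: Lenstra-particular-sphere} holds, Stirling's formula applied to $\log\Gamma(1 + n/2)+(n/2)\log(4/(\pi n)) = (n/2)\log(2/(e\pi)) + O(\log n)$ converts the criterion into the lower bound $\log M > \log\sigma_n + \tfrac{n}{2}\log(2/(e\pi)) + \log\sqrt{|\Delta|} + O(\log n)$. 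Using $M \leq N(I_0)$, multiplying by $\theta$ and chaining with the previous display produces a lower bound for $\log\sqrt{|\Delta|}$. Combining it with the upper bound $\log\sqrt{|\Delta|} < n\log 2 - \log\sigma_n - \tfrac{n}{2}\log(2/(e\pi)) + O(\log n)$ coming from \eqref{eq: Lenstra-particular-sphere} and $M \leq 2^n$, and using $\log(2/(e\pi)) + \log(4\pi) = 3\log 2 - 1$, the contradiction requires
\[
\log\sigma_n + \tfrac{n}{2}\bigl(3\log 2 + \gamma - 1\bigr) \;>\; (1-\theta)\, n\log 2 + O(n^{1-\ep}).
\]

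Finally I insert the asymptotic $\log\sigma_n = -\tfrac{n}{2}\log 2 + O(\log n)$ from Section~\ref{sec: sigma-n}. Dividing through by $n\log 2$, the contradiction condition reduces to
\[
\theta \;>\; 1 - \frac{1}{C} + O(n^{-\ep}), \qquad C = \frac{2\log 2}{2\log 2 + \gamma - 1}.
\]
Under \eqref{eq: zeta-K-lower-uniform}, $\theta \geq 1 - 1/(C + \eta) = (1 - 1/C) + \eta/(C(C+\eta))$, which exceeds $1 - 1/C$ by a positive constant depending only on $\eta$. As $n_m \to \infty$, the error $O(n_m^{-\ep})$ is eventually smaller than this gap, so the threshold is met and \eqref{eq: Lenstra-particular-sphere} fails for all but finitely many $m$. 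The main technical point is the balancing of the two error terms $O(n^{1-\ep})$ and $O(n^{\ep})$ from the lemma: the choice $\beta = n^{-\ep}$ with $0 < \ep < 1$ is exactly what forces both to be $o(n)$ and hence negligible against the leading-order bookkeeping above.
\end{skproof}
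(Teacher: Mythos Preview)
Your proof is correct and follows essentially the same approach as the paper: specialise Lemma~\ref{lem:lower-M-zeta-K} with $\beta(n)=n^{-\ep}$, combine it with the lower bound on $\log M$ coming from \eqref{eq: Lenstra-particular-sphere} via Stirling and $\sigma_n\sim \tfrac{n}{e}2^{-n/2}$, and close with $M\le 2^n$. The only organisational difference is that you route the argument through upper and lower bounds on $\log\sqrt{|\Delta|}$, whereas the paper subtracts the lemma from the Lenstra inequality directly to obtain $(1-\theta)\log M > \tfrac{n}{2}(\log 4+\gamma-1)+o(n)$ and then applies $M\le 2^n$ in one step; after your cancellation $A+B>(1-\theta)n\log 2+E$ the two routes coincide. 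One small bookkeeping point: in your final displayed condition the error should read $O(n^{1-\ep})+O(n^{\ep})$ rather than $O(n^{1-\ep})$ alone (the $O(1/\beta)$ term from the lemma contributes $O(n^{\ep})$), so after dividing by $n$ you get $O(n^{-\ep})+O(n^{\ep-1})$; both vanish for $0<\ep<1$, so the conclusion is unaffected.
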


To our knowledge, the property \eqref{eq: zeta-K-lower-uniform} is not explored in the existing literature. The advantage of assuming \eqref{eq: zeta-K-lower-uniform} over assuming GRH is that we can compute $\zeta_{K}\left( 1 + n^{-\ep} \right)$ for any $0 < \varepsilon < 1$ and for a number field $K$ of not too large degree $n$, whereas GRH is not known for any Dedekind zeta function.

In Appendix \ref{subsec: app-plots}, we compute $\zeta_{K_m}\left( 1 + n_m^{-3/4} \right)$ for the cyclotomic number fields $K_m = \QQ[e^{2\pi i/m}]$ for $1 \leq m \leq 350$: the case $\ep = \frac{3}{4}$ suggests a strong increasing pattern for $\zeta_{K_m}\left( 1 + n_m^{-3/4} \right)$ which quickly starts exceeding the right-hand side of \eqref{eq: zeta-K-lower-uniform}. We note that as we mentioned above, the finitude of norm-Euclidean cyclotomic number fields is already proven, and thus implies the failure of \eqref{eq: Lenstra-particular-sphere} for almost all cyclotomic number fields. Nevertheless, the property \eqref{eq: Lenstra-particular-sphere} or, in general, the uniform lower bounds in the families of the Dedekind zeta functions on the right of $1$ may be interesting to explore further.


We also provide a generalisation of Theorem \ref{thm: zeta-Lenstra} in Lemma \ref{lem: gen-zeta-Lenstra}: the right-hand sides of both \eqref{eq: Lenstra-particular-sphere} and \eqref{eq: zeta-K-lower-uniform} depend on the Rogers upper bound for sphere packings (see Section \ref{subsec: Lenstra Criterion - choice of U}). Meanwhile, there are other bounds improving upon the Rogers result --- using them in Lenstra's criterion will improve upon the criterion \eqref{eq: Lenstra-particular-sphere}. We raise some particular open questions in Appendix \ref{subsec: app-plots}.

The outline of the paper is as follows. In Section \ref{sec: Lenstra Criterion}, we introduce Lenstra's criterion for the norm-Euclideanity and its two particular forms which Lenstra used to derive new norm-Euclidean number fields. One of these particular forms relates to the aforementioned Rogers constant $\sigma_n$. In Section \ref{sec: sigma-n}, we provide explicit upper and lower bounds for $\sigma_n$ and use them to prove Theorem \ref{thm: main} in Section \ref{sec: main-thm-proof}. In Section \ref{sec: unconditional}, we prove Lemma \ref{lem:lower-M-zeta-K}, Theorem \ref{thm: zeta-Lenstra} and the generalisation of the theorem, Lemma \ref{lem: gen-zeta-Lenstra}.

\section{Lenstra's criterion for norm-Euclideanity} \label{sec: Lenstra Criterion}

\subsection{Setting} \label{subsec: Lenstra Criterion - Setting}

In \cite{Lenstra1977Euclidean}, Lenstra introduced a criterion for the norm-Euclideanity of number fields which combines packing theory, geometry of numbers, and the information connected to \emph{exceptional units} of their rings of integers.

We first recall the properties from geometry of numbers required for the criterion. Let $K$ be a number field of degree $n$, and $N$ be its classic norm, i.e.\ $N(x) = |\det_{\QQ}(~\cdot x)|$. Then $N$ can be computed using the embeddings of $K$ into $\CC$: let $K$ have $r$ real and $s$ complex embeddings so that $n = r + 2s$, and let $\mathcal{M}: K \hookrightarrow \RR^r \times \CC^s$ be its Minkowski embedding. Then the map
\begin{align*}
    &N_{\RR}: \RR^r \times \CC^s \to \RR_{\geq 0},\\
    &N_{\RR}: (x_1, \ldots, x_{r+s}) \mapsto \prod_{j=1}^{r} |x_j| \prod_{j=r+1}^{r+s} |x_j|^2.
\end{align*}
coincides with $N$ when restricted to $K$. Moreover, let $\cO_K$ denote the ring of integers and $\Delta$ be the discriminant of $K$. Then $\mathcal{M}(\cO_K)$ is a lattice in $\RR^r \times \CC^s \simeq \RR^n$ of covolume $\sqrt{|\Delta|}$.

Let us turn to the information about the exceptional units. We consider the sets
$$S_{\omega_1, \ldots, \omega_m} = \{ \omega_1, \ldots, \omega_m ~|~ \omega_i \in \cO_K,~ \omega_i - \omega_j \in \cO_K^* \},$$
and define $M = \max\limits_{S_{\omega_1, \ldots, \omega_m}}\{m\}$. We also note that $M \geq |S_{0,1}| = 2$. Moreover, if we shift all elements of $S_{\omega_1, \ldots, \omega_m}$ by $-\omega_1$ and then divide them by $\omega_2 - \omega_1$, we will get the set $S_{0, 1, \xi_1, \ldots, \xi_{m-2}}$, with $\xi_i, 1 - \xi_i, \xi_i - \xi_j \in \cO_K$ for any $i, j \in \{1, 2, \ldots, m-2\}, i \ne j$. The elements $\xi \in \cO_K^*$ satisfying $1-\xi \in \cO_K^*$ are called \emph{exceptional units}, see \cite{Houriet2007}, \cite{Chowla1961}, or \cite{Nagell1964}.

Finally, we define the packing information (see \cite{Lenstra1977Euclidean} or \cite[p. 13]{ConwaySloane} for more details). As mentioned above, the Minkowski map $\mathcal{M}$ embeds $K$ into the metric space $\RR^n$ with the classic Euclidean distance $d((x_1,\ldots,x_n),(y_1,\ldots,y_n)) = \sqrt{\sum_{i = 1}^n (x_i - y_i)^2}$. Let $U \subset \RR^n$ be a bounded subset with positive Lebesgue measure $\mu(U)$ and $(a_k)_{k \in \NN}$ be a sequence of points in $\RR^n$. Let $C_L \subset \RR^n$ be a cube of length $L$ centred at $0$. The \emph{density constant} of $\{U + a_k\}_{k \in \NN}$ is defined as
$$\rho(U_{(a_k)}) = \limsup_{L \to \infty}\frac{\sum\limits_{U + a_k \cap C_L \ne \emptyset} \mu(U + a_k)}{\mu(C_L)}.$$
We say that $\{U + a_k\}_{k \in \NN}$ is a \emph{packing} of the set $U$, if $(U + a_i) \cap (U + a_j) = \emptyset$ for any $i \ne j$. Then the \emph{packing constant} of $U$ is
$$\delta(U)=\sup_{\substack{\text{packings} \\ \{U + a_k\}_{k \in \NN}}}\rho(U_{(a_k)}),$$
and the \emph{centre-packing constant} of $U$ is
$$\delta^*(U)=\frac{\delta(U)}{\mu(U)}.$$

\begin{thm}\cite[(1.4)]{Lenstra1977Euclidean} \label{thm: Lenstra-criterion}
Let $K$ be a number field and $\cO_K$, $n$, $\Delta$, $N$, and $M$ be defined as at the beginning of the section. Further, let $U \subset \RR^n$ be a bounded Lebesgue measurable set with positive Lebesgue measure, such that
\begin{equation} \label{eq: prop-U}
    N_{\RR}(u - v) < 1 \textit{ for all } u, v \in U.
\end{equation}
Let $\delta^*(U)$ denote the centre packing constant of $U$. If
\begin{equation} \label{eq: Lenstra-main}
    M > \delta^*(U) \cdot \sqrt{|\Delta|},
\end{equation}
then $K$ is norm-Euclidean.
\end{thm}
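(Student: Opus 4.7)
The plan is to run a classical packing argument: translate norm-Euclideanity into an approximation problem over $K$, and then use the exceptional-unit configuration together with a hypothetical non-approximable $\xi\in K$ to build a packing of $U$ whose density contradicts \eqref{eq: Lenstra-main}.

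First I would recall the standard reformulation: $\cO_K$ is norm-Euclidean if and only if for every $\xi\in K$ there exists $\omega\in\cO_K$ with $|N(\xi-\omega)|<1$ — this is precisely the statement obtained from the Euclidean condition by writing $\xi=a/b$ with $a,b\in\cO_K$, $b\neq 0$. Arguing by contradiction, I fix $\xi\in K$ with $|N(\xi-\omega)|\geq 1$ for every $\omega\in\cO_K$; note that $\xi\notin\cO_K$, since otherwise $\omega=\xi$ would give norm $0$.

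Now let $\omega_1,\ldots,\omega_M$ realise the maximum $M$, so that $\omega_i-\omega_j\in\cO_K^{*}$ for all $i\neq j$. For $i\in\{1,\ldots,M\}$ and $\omega\in\cO_K$ set
\[ a_{i,\omega}:=\mathcal{M}(\omega_i\xi+\omega)\in\RR^n, \]
and consider the family $\{a_{i,\omega}+U\}$. The central step is to check it is a packing. If $a_{i,\omega}+U$ and $a_{j,\omega'}+U$ intersect, then
\[ a_{j,\omega'}-a_{i,\omega}=\mathcal{M}\bigl((\omega_j-\omega_i)\xi+(\omega'-\omega)\bigr)\in U-U, \]
so \eqref{eq: prop-U} yields $|N((\omega_j-\omega_i)\xi+(\omega'-\omega))|<1$. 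If $i=j$, integrality of the norm on $\cO_K$ forces $\omega'-\omega=0$. If $i\neq j$, setting $\alpha:=\omega_j-\omega_i\in\cO_K^{*}$ and $\omega'':=-\alpha^{-1}(\omega'-\omega)\in\cO_K$, the quantity factors as $\alpha(\xi-\omega'')$, and $|N(\alpha)|=1$ collapses the inequality to $|N(\xi-\omega'')|<1$, contradicting the choice of $\xi$. Hence the family is indeed a packing.

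Finally I would read off the density and conclude. The centres form $\bigcup_{i=1}^{M}\mathcal{M}(\omega_i\xi+\cO_K)$, a union of $M$ cosets of the lattice $\Lambda:=\mathcal{M}(\cO_K)$; these cosets are pairwise distinct, because $\omega_i\xi+\cO_K=\omega_j\xi+\cO_K$ would force $(\omega_i-\omega_j)\xi\in\cO_K$ and hence $\xi\in\cO_K$ (since $\omega_i-\omega_j$ is a unit). Since $\Lambda$ has covolume $\sqrt{|\Delta|}$ and the packing is $\Lambda$-periodic, its density equals $M\mu(U)/\sqrt{|\Delta|}$ exactly, so $M\mu(U)/\sqrt{|\Delta|}\leq\delta(U)=\delta^{*}(U)\mu(U)$ gives $M\leq\delta^{*}(U)\sqrt{|\Delta|}$, contradicting \eqref{eq: Lenstra-main}. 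The main obstacle I expect is the disjointness check: one must simultaneously exploit that the differences $\omega_i-\omega_j$ are units, that $\xi$ admits no good $\cO_K$-approximation, and that nonzero elements of $\cO_K$ have integer norm of absolute value at least $1$; once that is clear, the density computation is standard for a $\Lambda$-periodic packing.
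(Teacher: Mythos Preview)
The paper does not give its own proof of this theorem: it is quoted verbatim from Lenstra's paper \cite[(1.4)]{Lenstra1977Euclidean} and used as a black box, so there is no in-paper argument to compare against. That said, your argument is exactly Lenstra's original one and is correct: the contrapositive reformulation of norm-Euclideanity, the packing $\{\mathcal{M}(\omega_i\xi+\omega)+U\}$, the disjointness check via the unit property of $\omega_i-\omega_j$, and the periodic-density count $M\mu(U)/\sqrt{|\Delta|}\le\delta(U)$ are precisely the steps in \cite{Lenstra1977Euclidean}.
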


\subsection{The choice of set $U$} \label{subsec: Lenstra Criterion - choice of U}

In \cite{Lenstra1977Euclidean}, the author introduces two particular choices of $U$, a parallelepiped and a sphere in $\RR^n$. We briefly introduce both choices. Let
\begin{equation*}
    U_1=\Biggl\{ (x_j)_{j=1}^{r+s} \in \mathbb{R}^r \times \mathbb{C}^s \Bigg| \sum_{j=1}^r |x_j|+2\sum_{j=r+1}^{r+s} |x_j|<\frac{n}{2} \Biggr\},
\end{equation*}
then $\mu(U_1)=\frac{n^n}{n!}\left(\frac{\pi}{4}\right)^s$ and $\delta(U_1)\leq 1$, whence
\begin{equation} \label{eq: def-delta1}
    \delta^*(U_1) \leq \frac{n!}{n^n}\left(\frac{4}{\pi}\right)^s =: \delta^*_1(n).
\end{equation}

Let
\begin{equation*}
    U_2= \Biggl\{ (y_j)_{j=1}^n \in \mathbb{R}^n \Bigg| \sum_{j=1}^n y_j^2<\frac{n}{4} \Biggr\},
\end{equation*}
then
\begin{equation*}
    \mu(U_2) = \left(\frac{n}{4}\right)^{n/2} \frac{\pi^{n/2}}{\Gamma\left( 1 + \frac{n}{2} \right)}, \quad \delta(U_2) \leq \sigma_n,
\end{equation*}
where $\Gamma$ is the Gamma function, and the \emph{Rogers constant} $\sigma_n$ will be defined in Section \ref{sec: sigma-n}. Thus
\begin{equation} \label{eq: def-delta2}
    \delta^*(U_2) \leq \sigma_n \left( \frac{4}{\pi n} \right)^{n/2} \Gamma\left( 1 + \frac{n}{2} \right) =: \delta^*_2(n).
\end{equation}

Thus in \cite{Lenstra1977Euclidean}, the author finds new norm-Euclidean number fields by showing either
\begin{equation} \label{eq: Lenstra-delta-1}
    M > \delta^*_1(n) \sqrt{|\Delta|},
\end{equation}
or
\begin{equation} \label{eq: Lenstra-delta-2}
     M > \delta^*_2(n) \sqrt{|\Delta|}.
\end{equation}
We note that Theorem \ref{thm: Lenstra-criterion-sigma-n} is based on checking \eqref{eq: Lenstra-delta-2}. We show in Lemma \ref{lem: sigman-upper} that $\delta^*_2(n) \leq \delta^*_1(n)$ for all $n \geq 56$, implying that there is no need to check \eqref{eq: Lenstra-delta-1} if one can check \eqref{eq: Lenstra-delta-2} in this range. Meanwhile, the asymptotic for $\sigma_n$ \cite{Rogers1958} and the bound $M \leq 2^n$ \cite{Lenstra1977Euclidean} combined with \eqref{eq: Lenstra-delta-2} imply that
\begin{equation} \label{eq: Lenstra-sigma-n-asymptotic}
    |\Delta|^{1/n} \leq 4 \pi e + o(1), ~n \to \infty,
\end{equation}
and therefore contradicts the bound under GRH \cite{PoitouMinorationsDiscrFev1976} by Serre:
\begin{equation} \label{eq: Odlyzko-GRH-lower}
    |\Delta|^{1/n} \geq 8 \pi e^\gamma + o(1), ~n \to \infty,
\end{equation}
where $\gamma$ is the Euler--Mascheroni constant, since $\frac{8 \pi e^{\gamma}}{4 \pi e} = 2 e^{\gamma - 1} \geq 1.31 > 1$. This means that for $n$ large enough, both criteria \eqref{eq: Lenstra-delta-1} and \eqref{eq: Lenstra-delta-2} do not hold and therefore cannot be used for determining norm-Euclideanity of number fields of very large degrees. We will show $n \geq 62238$ implies the failure of \eqref{eq: Lenstra-delta-2} and thus, of \eqref{eq: Lenstra-delta-1}.

\begin{rem} \label{rem: uncond}
\textnormal{The best unconditional lower bound for discriminants \cite[(2.5)]{OdlyzkoSurvey1990}
\begin{equation} \label{eq: disc-lower-uncond}
    |\Delta|^{1/n} \geq (4\pi e^{1 + \gamma})^{r/n} (4\pi e^{\gamma})^{2s/n} + o(1)
\end{equation}
contradicts \eqref{eq: Lenstra-sigma-n-asymptotic} only if $\frac{r}{n} > 1 - \gamma$.
}
\end{rem}





\section{Explicit bounds for $\sigma_n$} \label{sec: sigma-n}

Let us keep the notations from Section \ref{sec: Lenstra Criterion}.
We define the \emph{Rogers constant}, denoted by $\sigma_n$, as follows. Let $S$ be a regular $n$-simplex in $\mathbb{R}^n$ of edge length 2, and let $V_i$, $i = 1,2,\ldots,n+1$, denote its vertices. Define $T(V_i)$ to be the intersection of $S$ with a ball of radius $1$ centred at $V_i$, and let $T = \bigcup_{i = 1}^{n+1} T(V_i)$. Then 
$$\sigma_n = \frac{\text{vol}(T)}{\text{vol}(S)}.$$

The asymptotic for this constant, $\sigma_n \sim \frac{n}{e} \cdot 2^{-n/2}$, was derived in \cite{Rogers1958}, and an approximate upper bound for some particular choices of $n$ was provided in \cite[p. 743]{LeechSloane1971}:
\begin{equation} \label{eq: Leech-computation-sigman}
    \log_2{\sigma_n}\leq \frac{n}{2}\log_2{\left(\frac{n}{4 e}\right)}+\frac{3}{2}\log_2{\frac{e}{\sqrt{\pi}}}+\frac{5.25}{n+2.5} - \log_2 \Gamma\left( 1 + \frac{n}{2}\right).
\end{equation}
We provide an explicit upper bound for $\sigma_n$ for all $n \geq 1$ and an explicit lower bound for all $n \geq 3$; we start with an upper bound.

\begin{lem} \label{lem: sigman-upper}
We have\footnote{We note that $\log_2($eq.\eqref{eq: sigman-upper}$)$ exceeds \eqref{eq: Leech-computation-sigman} by $\frac{3}{2} \log_2 n + \frac{1}{2} - \frac{3}{2} \log_2 e + \frac{5}{4} \log_2 \pi + \frac{13 \log_2 e}{12n} - \frac{5.25}{n+2.5}$.} for all $n \geq 1$,
\begin{equation} \label{eq: sigman-upper}
    \sigma_n \leq \left(\frac{e}{4n}\right)^{n/2} \frac{(n+1)!}{\Gamma\left( 1 + \frac{n}{2}\right)}.
\end{equation}
As a consequence, $\delta^*_2(n) \leq \delta^*_1(n)$ for all $n \geq 56$, where $\delta^*_1(n)$ and $\delta^*_2(n)$ are defined in \eqref{eq: def-delta1} and \eqref{eq: def-delta2} respectively.
\end{lem}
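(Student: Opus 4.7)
The plan is to combine the symmetry of $S$ with an explicit estimate of the ball--cone intersection at a vertex via the Gram matrix of the edge vectors.

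Concretely, the vertices of $S$ have pairwise distance $2$ and the distance from any vertex to the opposite facet equals $\sqrt{2(n+1)/n}\geq\sqrt{2}$, so the pieces $T(V_i)$ are pairwise disjoint and each coincides with $B(V_i,1)\cap C(V_i)$, where $C(V_i):=V_i+\bigl\{\sum_{j\neq i}t_j u_j:t_j\geq 0\bigr\}$ is the vertex cone and $u_j:=(V_j-V_i)/2$ are unit edge vectors satisfying $u_j\cdot u_k=\tfrac12$ for $j\neq k$. Their Gram matrix $G=\tfrac12(I+J)$ has $\det G=(n+1)/2^n$, so the linear map $t\mapsto\sum t_j u_j$ has Jacobian $\sqrt{n+1}/2^{n/2}$ and
\[
\mathrm{vol}(T(V_i))=\frac{\sqrt{n+1}}{2^{n/2}}\,\mathrm{vol}\bigl(\{t\geq 0:t^TGt\leq 1\}\bigr).
\]
Since $t^TGt=\tfrac12(|t|^2+(\mathbf{1}^Tt)^2)$, the constraint forces $\mathbf{1}^Tt\leq\sqrt{2}$, so the domain sits inside the scaled standard simplex $\{t\geq 0:\mathbf{1}^Tt\leq\sqrt{2}\}$ of volume $2^{n/2}/n!$. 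Using $\mathrm{vol}(S)=2^{n/2}\sqrt{n+1}/n!$ then gives the clean intermediate bound $\sigma_n\leq(n+1)/2^{n/2}$. The form stated in the lemma follows from a Stirling-type inequality $\Gamma(1+n/2)\leq n!(e/(2n))^{n/2}$ (equivalent to $n!/\Gamma(1+n/2)\geq(2n/e)^{n/2}$), which holds for all $n\geq 1$ via explicit error bounds in Stirling's formula, the asymptotic margin being $\sqrt{2}$.

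For the consequence, since $(4/\pi)^s\geq 1$ one has $\delta_1^*(n)\geq n!/n^n$. Substituting the bound on $\sigma_n$ into $\delta_2^*(n)=\sigma_n(4/(\pi n))^{n/2}\Gamma(1+n/2)$ and applying the same Stirling inequality yields
\[
\delta_2^*(n)\leq(n+1)(e/\pi)^{n/2}\,\frac{n!}{n^n}.
\]
It therefore suffices to verify $(n+1)(e/\pi)^{n/2}\leq 1$ for $n\geq 56$. The ratio of consecutive values equals $\frac{n+2}{n+1}\sqrt{e/\pi}$, which drops below $1$ precisely once $n\geq 13$, making the sequence monotone decreasing from that point; a direct check at $n=56$ gives the value $\approx 0.99<1$, completing the range.

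The main technical obstacle will be establishing the Stirling inequality uniformly down to $n=1$: while the asymptotic version is immediate, the small-$n$ verification requires using explicit (rather than asymptotic) error bounds in Stirling's formula, which is what ultimately pins down the $(e/(4n))^{n/2}$ shape of the claimed bound.
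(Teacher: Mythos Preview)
Your argument is correct but takes a genuinely different route from the paper. For the bound on $\sigma_n$, the paper does not argue geometrically at all: it simply quotes Rogers' integral representation
\[
\frac{2^n n^{n/2}\sqrt{\pi}}{e^{n/2}(n+1)!}\,\Gamma\!\left(1+\tfrac{n}{2}\right)\sigma_n
=\int_{-\infty}^{\infty} e^{-u^2}\left[2\kappa e^{-iu/\kappa}\int_0^\infty e^{-y^2-2\kappa y+2iuy}\,dy\right]^n du
\]
together with the observation that the bracketed factor has modulus at most $1$, so the right-hand side is at most $\sqrt{\pi}$; the stated inequality drops out in one line. You instead bound $\mathrm{vol}(T(V_i))$ directly by embedding the vertex-cone slice into a scaled standard simplex via the Gram matrix, obtaining the cleaner and in fact \emph{sharper} intermediate bound $\sigma_n\le (n+1)2^{-n/2}$, and only then relax it to the stated form using $\Gamma(1+n/2)\le n!\,(e/(2n))^{n/2}$. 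That Stirling inequality, which you flag as the main residual obstacle, is not a gap: Robbins' explicit bounds give $n!/\Gamma(1+n/2)\ge \sqrt{2}\,e^{-1/(6n)}(2n/e)^{n/2}$, and $\sqrt{2}\,e^{-1/(6n)}>1$ for every $n\ge 1$.

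For the consequence $\delta_2^*(n)\le\delta_1^*(n)$, the two proofs coincide: both reduce to $(n+1)(e/\pi)^{n/2}\le 1$ and check the threshold $n=56$. What your approach buys is self-containment---no appeal to Rogers' integral identity---at the price of the extra Stirling step; what the paper's approach buys is brevity, and it also sets up the integral formula that is reused later for the explicit \emph{lower} bound on $\sigma_n$.
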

\begin{proof}
Let $\kappa = \sqrt{\frac{n}{2}}$. The upper bound \eqref{eq: sigman-upper} directly follows from \cite[Section 4]{Rogers1958}. There, it was shown that
\begin{equation} \label{eq: sigma-n-expl}
    \frac{2^n n^{n/2} \sqrt{\pi}}{e^{n/2} (n+1)!} \Gamma\left( 1 + \frac{n}{2} \right) \sigma_n = \int_{-\infty}^{\infty} e^{-u^2} \left[ 2 \kappa e^{-iu/\kappa} \int_{0}^{\infty} e^{-y^2 - 2 \kappa y + 2 i u y} dy \right]^n du,
\end{equation}
with the right-hand side bounded from above by $\int_{-\infty}^{\infty} e^{-u^2} du = \sqrt{\pi}$, which implies \eqref{eq: sigman-upper}. Hence by \eqref{eq: def-delta1}, \eqref{eq: def-delta2}, and \eqref{eq: sigman-upper},
\begin{equation*}
\frac{\delta^*_2(n)}{\delta^*_1(n)} \leq (n+1) \left( \frac{e}{\pi} \right)^{n/2} \left( \frac{\pi}{4} \right)^{s} \leq (n+1) \left( \frac{e}{\pi} \right)^{n/2},
\end{equation*}
and the right-hand side does not exceed $1$ for all $n \geq 56$.
\end{proof}

We note that $\delta_2^{*}(n)$ was computed for $n \leq 36$ in \cite[Table 3]{CohnElkies2003} and for some multiples of $12$ in \cite[Table 1]{CohnZhao2014}. These computations suggest that $\delta^*_2(n) \leq \delta^*_1(n)$ for $n \geq 30$. Hence, by Lemma \ref{lem: sigman-upper}, computing $\delta_2^*(n)$ for $37 \leq n \leq 55$ would suffice to check this inequality for all $n \geq 30$.

Let us now provide an explicit lower bound for $\sigma_n$ for all $n \geq 3$. Let $\kappa = \sqrt{\frac{n}{2}} > 1$, then we will estimate the integral on the right-hand side of \eqref{eq: sigma-n-expl} by considering two cases: $|u| \leq \kappa^{\theta}$ and $|u| > \kappa^{\theta}$ where $0 < \theta < \frac{1}{3}$ is a constant to be defined later. We start with the case $|u| \leq \kappa^{\theta}$, see Lemma \ref{lem: lower bound up to k^theta} below. Let us introduce the necessary notations for the lemma:
\begin{align}
&C_{1,\kappa,\theta} = \frac{1}{8} \left( \frac{2\sqrt{\pi}\sqrt{1 + \kappa^{2\theta - 2}}}{e} + \frac{6}{\kappa} \left( 1 + \frac{\sqrt{1+\kappa^{2\theta - 2}}}{e} \right) + \frac{3}{\kappa^{3}} \right) \label{eq: def-C-1-kappa-theta}\\
&C_{2, \kappa, \theta} = \frac{1}{1 - \kappa^{\theta - 1}} \left( \frac{1}{1 - \frac{\kappa^{\theta-1}}{4}} + \frac{5}{2} \right) \label{eq: def-C-2-kappa-theta}\\
&C_{3, \kappa, \theta} = \sqrt{1 + \frac{\kappa^{2\theta-2}}{4}} \label{eq: def-C-3-kappa-theta}\\
&C_{41, \kappa, \theta} = C_{3, \kappa, \theta} + \kappa^{3\theta - 3} C_{2, \kappa, \theta} C_{3, \kappa, \theta} + \frac{\kappa^{\theta-1}}{4} \label{eq: def-C-41-kappa-theta}\\
&C_{42, \kappa, \theta} = C_{2, \kappa, \theta} \left( 1 - \frac{1}{2\kappa^2}\right). \label{eq: def-C-42-kappa-theta}\\
&C_{\kappa,\theta}(u) = C_{1, \kappa, \theta} + C_{41, \kappa, \theta} |u| + C_{42, \kappa, \theta} |u|^3. \label{eq: def-C-kappa-theta}
\end{align}

Let $P_{\kappa, \theta}(u) = C_{\kappa, \theta}(u) - \frac{\kappa}{2} u^2$, then the polynomial $P_{\kappa, \theta}$ satisfies the following properties which we will use in Lemma \ref{lem: lower bound up to k^theta}:
\begin{itemize}
    \item The coefficients of $P_{\kappa, \theta}(u)$ are positive, decreasing in $\kappa$, and increasing in $\theta$ for all $\kappa > 1$ and $0 < \theta < \frac{1}{3}$.
    \item For all $\kappa > 1$, $P_{\kappa, \theta}(0) > 0$. Moreover, $P_{\kappa, \theta}(\kappa^{\theta}) < 0$ for all $\kappa \geq 24$ and all $0 < \theta < \frac{1}{3}$: this follows from $P_{\kappa, \theta}(\kappa^{\theta}) \leq C_{42, 24, \theta}(\kappa^{\theta}) - \frac{\kappa^{1 + 2\theta}}{2}$. Thus, $P_{\kappa, \theta}(u)$ has a unique root in the interval $[0, \kappa^{\theta}]$. This root does not exceed $0.19$ and is of size $O(\kappa^{-1/2})$ as $\kappa \to \infty$.
\end{itemize}

\begin{lem} \label{lem: lower bound up to k^theta}
Let $\kappa = \sqrt{n/2} \geq 24$, $\nu = 2\kappa \left( 1 - \frac{iu}{\kappa} \right)$, and $0 < \theta < \frac{1}{3}$. Assume $C_{\kappa, \theta}$ is defined in \eqref{eq: def-C-kappa-theta} with subsidiary constants defined by \eqref{eq: def-C-1-kappa-theta}, \eqref{eq: def-C-2-kappa-theta}, \eqref{eq: def-C-3-kappa-theta}, \eqref{eq: def-C-41-kappa-theta}, and \eqref{eq: def-C-42-kappa-theta}. Let $U_{\kappa, \theta}$ be the unique solution to the equation
\begin{equation}
    C_{\kappa, \theta}(u) = \frac{\kappa}{2} u^2
\end{equation}
in the interval $[0, \kappa^{\theta}]$. Then
\begin{align} 
    &\int_{-\kappa^{\theta}}^{\kappa^{\theta}} e^{-u^2} \left[ 2 \kappa e^{-iu/\kappa} \int_{0}^{\infty} e^{-y^2 - \nu y} dy \right]^n du \nonumber \\
    &\geq \int_{-\kappa^{\theta}}^{\kappa^{\theta}} e^{-u^2} \left[ 1 - \frac{u^2}{2\kappa^2} \right]^n du - \frac{2 \sqrt{\pi} C_{\kappa,\theta}(\kappa^{\theta})}{\kappa} - \frac{4 U_{\kappa, \theta}C_{\kappa,\theta}(U_{\kappa, \theta})}{\kappa} \left( 1 + \frac{4 C_{\kappa,\theta}(U_{\kappa, \theta})}{\kappa} \right). \label{eq: lower bound up to k^theta}
\end{align}
\end{lem}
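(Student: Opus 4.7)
\begin{skproof}
The plan is to expand the complex integrand
$f(u) := 2\kappa\,e^{-iu/\kappa}\int_0^\infty e^{-y^2-\nu y}\,dy$
around the polynomial $M(u) := 1 - u^2/(2\kappa^2)$ and then compare $f(u)^n$ with $M(u)^n$ via the telescoping identity
\[
a^n - b^n = (a - b)\sum_{k=0}^{n-1} a^k b^{n-1-k},
\]
combined with the uniform estimates $|f(u)|,\,M(u) \leq 1$ on the integration range.

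First I would expand $f$ quantitatively. Completing the square gives $\int_0^\infty e^{-y^2-\nu y}\,dy = \tfrac{\sqrt{\pi}}{2}\,e^{(\kappa-iu)^2}\,\mathrm{erfc}(\kappa - iu)$, and since $\Re(\kappa - iu) = \kappa \geq 24$, repeated integration by parts in the $\mathrm{erfc}$ integral yields
\[
f(u) = \frac{e^{-iu/\kappa}}{1 - iu/\kappa} \;-\; \frac{e^{-iu/\kappa}}{2\kappa^2(1-iu/\kappa)^3} + \widetilde R(u),
\]
with $|\widetilde R(u)|$ of order $\kappa^{-4}$ uniformly on $[-\kappa^\theta, \kappa^\theta]$. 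Expanding $e^{-iu/\kappa}$ and $(1 - iu/\kappa)^{-k}$ by Taylor's formula with explicit integral remainder and collecting contributions, I obtain a representation $f(u) = M(u) + E(u)$ whose error is bounded by a function with the structure of $C_{\kappa,\theta}(u) = C_{1,\kappa,\theta} + C_{41,\kappa,\theta}|u| + C_{42,\kappa,\theta}|u|^3$: the constant term absorbs $\widetilde R$ and the value at $u = 0$ of the $(1 - iu/\kappa)^{-3}$ correction, the linear term collects cross-product contributions, and the cubic term comes from the Taylor remainder of $e^{-iu/\kappa}/(1 - iu/\kappa)$ at order $u^3$. The specific formulas \eqref{eq: def-C-1-kappa-theta}--\eqref{eq: def-C-42-kappa-theta} then record the uniform constants obtained after majorising each remainder throughout the range $\kappa \geq 24$, $0 < \theta < 1/3$.

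Next I would pass from $|f - M|$ to $|f^n - M^n|$. The bound $|f(u)| \leq 1$ on $[-\kappa^\theta,\kappa^\theta]$ follows from $|J(u)| \leq J(0) = \tfrac{\sqrt\pi}{2}\,e^{\kappa^2}\mathrm{erfc}(\kappa)$ combined with the elementary estimate $\mathrm{erfc}(\kappa) \leq e^{-\kappa^2}/(\kappa\sqrt\pi)$; the inequality $0 \leq M(u) \leq 1$ is immediate on the same range. The telescoping identity then yields $|f(u)^n - M(u)^n| \leq n\,|f(u) - M(u)|$, and since $n = 2\kappa^2$, the pointwise bound on $|E(u)|$ upgrades to $|f^n - M^n| \leq 2 C_{\kappa,\theta}(u)/\kappa$. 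Integrating against $e^{-u^2}$, using $\int_{-\kappa^\theta}^{\kappa^\theta} e^{-u^2}\,du \leq \sqrt{\pi}$ and the monotonicity $C_{\kappa,\theta}(u) \leq C_{\kappa,\theta}(\kappa^\theta)$, produces the first error term $\tfrac{2\sqrt{\pi}\,C_{\kappa,\theta}(\kappa^\theta)}{\kappa}$. The second error term arises from refining the telescoping estimate on the subinterval $|u| \leq U_{\kappa,\theta}$, where the defining inequality $C_{\kappa,\theta}(u) \leq \tfrac{\kappa}{2}u^2$ lets one iterate the estimate once more with sharper bounds on the factors of the telescoping sum; the interval length $2U_{\kappa,\theta}$ and endpoint value $C_{\kappa,\theta}(U_{\kappa,\theta})$, together with this second iteration, produce the factor $\tfrac{4 U_{\kappa,\theta} C_{\kappa,\theta}(U_{\kappa,\theta})}{\kappa}\bigl(1 + \tfrac{4 C_{\kappa,\theta}(U_{\kappa,\theta})}{\kappa}\bigr)$.

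The main obstacle is the first step: producing a fully quantitative version of the $\mathrm{erfc}$ expansion along the vertical line $\Re z = \kappa$ with constants tight enough to assemble into the precise forms $C_{1,\kappa,\theta}$, $C_{41,\kappa,\theta}$, $C_{42,\kappa,\theta}$ uniformly in the parameter range $\kappa \geq 24$, $0 < \theta < 1/3$. Once that bookkeeping is done, the telescoping estimate and the final integration are elementary.
\end{skproof}
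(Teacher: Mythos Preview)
Your overall structure---expand the inner integral by repeated integration by parts (equivalently, the $\mathrm{erfc}$ asymptotic), Taylor-expand $e^{-iu/\kappa}(1-iu/\kappa)^{-k}$, and then pass from $f-M$ to $f^n-M^n$---matches the paper. The interesting difference is in the last step. You use the global bound $|f(u)|\le 1$ (which is correct: $|f(u)|\le 2\kappa\int_0^\infty e^{-2\kappa y}\,dy=1$) together with $0\le M(u)\le 1$, and the telescoping identity then gives $|f^n-M^n|\le n|E|$ uniformly on the whole interval; integrating against $e^{-u^2}$ yields the first error term $2\sqrt{\pi}\,C_{\kappa,\theta}(\kappa^\theta)/\kappa$ alone. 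The paper does \emph{not} invoke $|f|\le 1$; it bounds $(M+|E|)^n-M^n$ by the mean-value theorem, and $(M+|E|)^{n-1}\le 1$ only when $|E|\le u^2/(2\kappa^2)$, i.e.\ for $|u|\ge U_{\kappa,\theta}$. That is precisely why the paper must treat $|u|<U_{\kappa,\theta}$ separately and why the second error term appears. Your route is more direct and in fact proves the lemma with only the first error term, which is stronger.

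Your final sentence, however, is garbled: on $|u|\le U_{\kappa,\theta}$ one has $C_{\kappa,\theta}(u)\ge\tfrac{\kappa}{2}u^2$, not $\le$ (the polynomial $P_{\kappa,\theta}(u)=C_{\kappa,\theta}(u)-\tfrac{\kappa}{2}u^2$ is positive at $0$ and has its first root at $U_{\kappa,\theta}$). The second error term in the paper is not a refinement---it is the price paid for $(M+|E|)$ possibly exceeding $1$ near $u=0$, handled via $|E|\le U_{\kappa,\theta}^2/n\le 1/n$ and $(1+|E|)^{n-1}\le 1+2n|E|$. Since your telescoping bound already covers that region, you should simply drop your discussion of the second term; what remains is a cleaner argument than the paper's.
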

\begin{proof}
The inner integral from $0$ to $\infty$ in the first line of \eqref{eq: lower bound up to k^theta} can be expressed using integration by parts three times as follows:
\begin{equation} \label{eq: integr-by-parts}
    \int_{0}^{\infty} e^{-y^2 - \nu y} dy = \frac{1}{\nu} - \frac{2}{\nu^3} + \frac{12}{\nu^5} - \frac{8}{\nu^5} \int_{0}^{\infty} y e^{-y^2 - \nu y} (\nu^2 y^2 + 3 \nu y + 3) dy.
\end{equation}
The absolute value of the integral on the right-hand side of \eqref{eq: integr-by-parts} is less than or equal to
\begin{align*}
    &\int_{0}^{\infty} \left( |\nu| y^2 e^{-y^2} + 3 y e^{-y^2}\right) |\nu| y e^{- \Re(\nu) y} dy + 3 \int_{0}^{\infty} e^{-y^2} y e^{ - \Re(\nu) y} dy \\
    = & \int_{0}^{\infty} \left( |\nu| y^2 e^{-y^2} + 3 y e^{-y^2}\right) \sqrt{1 + \kappa^{2\theta - 2}} \left( 2\kappa y~ e^{- 2 \kappa y} \right) dy + 3 \int_{0}^{\infty} e^{-y^2} y e^{- 2 \kappa y} dy \\
    \leq & \frac{\sqrt{1 + \kappa^{2\theta - 2}}}{e} \int_{0}^{\infty} \left( |\nu| y^2 e^{-y^2} + 3 y e^{-y^2}\right) dy + 3 \int_{0}^{\infty} y e^{- 2 \kappa y} dy \\
    = & \frac{\sqrt{1 + \kappa^{2\theta - 2}}}{e} \left( |\nu| \frac{\sqrt{\pi}}{4} + \frac{3}{2} \right) + \frac{3}{4\kappa^2}.
\end{align*}
The last bound and \eqref{eq: integr-by-parts} imply
\begin{equation} \label{eq: error-internal-integral}
    \left| \int_{0}^{\infty} e^{-y^2 - \nu y} dy - \frac{1}{\nu} + \frac{2}{\nu^3} \right| \leq |\nu|^{-4} \cdot \left( \frac{\sqrt{1 + \kappa^{2\theta - 2}}}{e} \left( 2\sqrt{\pi} + \frac{12}{|\nu|} \right) + \frac{12}{|\nu|} + \frac{6}{|\nu| \kappa^2} \right).
\end{equation}
Let
\begin{equation} \label{def: E-u-kappa}
    E_{1, \kappa, \theta}(u) := 2 \kappa e^{-iu/\kappa} \left( \int_{0}^{\infty} e^{-y^2 - \nu y} dy - \frac{1}{\nu} + \frac{2}{\nu^3} \right),
\end{equation}
then by \eqref{eq: error-internal-integral},
\begin{equation*}
    \left| E_{1, \kappa,\theta}(u) \right| \leq |\nu|^{-3} \cdot \left( \frac{2\sqrt{\pi}\sqrt{1 + \kappa^{2\theta - 2}}}{e} + \frac{12}{|\nu|} \left( 1 + \frac{\sqrt{1+\kappa^{2\theta - 2}}}{e} \right) + \frac{6}{|\nu| \kappa^2} \right)
    \leq \frac{C_{1,\kappa,\theta}}{\kappa^3}.
\end{equation*}
where we used the definition of $\kappa$ and that $|\nu| \geq 2\kappa$ in the last line. Let us derive the asymptotic for the last two terms in \eqref{def: E-u-kappa}, namely let us show that
\begin{equation} \label{eq: multiply-exponent}
    \frac{2 \kappa e^{-iu/\kappa}}{\nu} - \frac{4 \kappa e^{-iu/\kappa}}{\nu^3} = 1 - \frac{u^2}{2\kappa^2} + O(\kappa^{3\theta-1}),
\end{equation}
and provide an explicit bound for the term $O(\kappa^{3\theta-1})$. Indeed,
\begin{align*}
    \frac{2 \kappa e^{-iu/\kappa}}{\nu} - \frac{4 \kappa e^{-iu/\kappa}}{\nu^3} = \frac{e^{-iu/\kappa}}{1 - \frac{iu}{\kappa}} \left( 1 - \frac{1}{2\kappa^2 \left( 1 - \frac{iu}{\kappa} \right)^2} \right).
\end{align*}
First,
\begin{align*}
    \frac{e^{-iu/\kappa}}{1 - \frac{iu}{\kappa}} &= \left( 1 - \frac{iu}{\kappa} - \frac{u^2}{2\kappa^2} + \sum_{m \geq 3} \frac{1}{m!} \left( \frac{iu}{\kappa} \right)^m\right) \left( 1 + \frac{iu}{\kappa} - \frac{u^2}{\kappa^2} + \sum_{m \geq 3} \left( \frac{iu}{\kappa} \right)^m\right)\\
    &= 1 - \frac{u^2}{2\kappa^2} + E_{2, \kappa, \theta}(u),
\end{align*}
with
\begin{align*}
    |E_{2, \kappa, \theta}(u)| \leq &\sum_{m \geq 3} \frac{|u|^m}{\kappa^m} + \frac{|u|}{\kappa} \sum_{m \geq 2} \frac{|u|^m}{\kappa^m} + \frac{|u|^2}{2\kappa^2} \sum_{m \geq 1} \frac{|u|^m}{\kappa^m} \\
    &+ \frac{|u|^3}{\kappa^3} \left( 1 + \frac{|u|}{4 \kappa} + \frac{|u|^2}{4^2\kappa^2} + \ldots \right) \left( 1 + \frac{|u|}{\kappa} + \frac{|u|^2}{\kappa^2} + \ldots \right)
    = C_{2, \kappa, \theta} \frac{|u|^3}{\kappa^3}. 
\end{align*}
Second,
\begin{align*}
    1 - \frac{1}{2\kappa^2 \left( 1 - \frac{iu}{\kappa} \right)^2} &= 1 - \frac{1}{2\kappa^2} \left( 1 + \frac{1}{\left( 1 - \frac{iu}{\kappa} \right)^2} - 1 \right) \\
    &= 1 - \frac{1}{2\kappa^2} - \frac{1}{2\kappa^2} \cdot \frac{2u}{\kappa} \left( i + \frac{u}{2\kappa} \right) \left( 1 - \frac{iu}{\kappa} \right)^{-2}\\
    &= 1 - \frac{1}{2\kappa^2} - \frac{u}{\kappa^3} \left( i + \frac{u}{2\kappa} \right) \left( 1 - \frac{iu}{\kappa} \right)^{-2} \\
    &= 1 - \frac{1}{2\kappa^2} + E_{3, \kappa, \theta}(u),
\end{align*}
where
\begin{equation*}
    |E_{3, \kappa, \theta}(u)| \leq \frac{|u|}{\kappa^3} C_{3, \kappa, \theta}.
\end{equation*}

Hence, the left-hand side of \eqref{eq: multiply-exponent} is equal to
\begin{equation*}
    \left( 1 - \frac{u^2}{2\kappa^2} + E_{2, \theta}(u, \kappa) \right) \left( 1 - \frac{1}{2\kappa^2} + E_{3, \kappa, \theta}(u) \right) = 1 - \frac{u^2 + 1}{2 \kappa^2} + E_{4, \kappa, \theta}(u),
\end{equation*}
where
\begin{align*}
|E_{4, \kappa, \theta}(u)| \leq \frac{|u|}{\kappa^3} C_{41, \kappa, \theta} + \frac{|u|^3}{\kappa^3} C_{42, \kappa, \theta} = O(\kappa^{3\theta-3}) \quad \text{as} \quad \kappa \to \infty.
\end{align*}

From \eqref{eq: integr-by-parts}, \eqref{eq: error-internal-integral}, and \eqref{def: E-u-kappa}, we have
\begin{align*}
    2 \kappa e^{-iu/\kappa} \int_{0}^{\infty} e^{-y^2 - \nu y} dy = 1 - \frac{u^2 + 1}{2 \kappa^2} + E_{1, \kappa, \theta}(u) + E_{4, \kappa, \theta}(u).
\end{align*}
Let us denote
\begin{equation} \label{eq: def-E-kappa-theta}
    E_{\kappa, \theta}(u) := E_{1, \kappa, \theta}(u) + E_{4, \kappa, \theta}(u),
\end{equation}
then
\begin{equation*}
    |E_{\kappa, \theta}(u)| \leq \frac{C_{1, \kappa, \theta}}{\kappa^3} + \frac{|u|}{\kappa^3} C_{41, \kappa, \theta} + \frac{|u|^3}{\kappa^3} C_{42, \kappa, \theta} = \frac{C_{\kappa,\theta}(u)}{\kappa^3}
\end{equation*}
and the left-hand side of \eqref{eq: lower bound up to k^theta} becomes
\begin{align} \label{eq: int_small_u_1}
    &\int_{-\kappa^{\theta}}^{\kappa^{\theta}} e^{-u^2} \left[ 1 - \frac{u^2}{2\kappa^2} + E_{\kappa, \theta}(u) \right]^n du \nonumber\\
    &\geq \int_{-\kappa^{\theta}}^{\kappa^{\theta}} e^{-u^2} \left[ 1 - \frac{u^2}{2\kappa^2} \right]^n du - \int_{-\kappa^{\theta}}^{\kappa^{\theta}} e^{-u^2} \left( \left[ 1 - \frac{u^2}{2\kappa^2} + |E_{\kappa, \theta}(u)| \right]^n - \left[ 1 - \frac{u^2}{2\kappa^2}\right]^n \right) du.
\end{align}
By the mean-value theorem, the expression inside the second integral can be estimated as follows.
\begin{equation}
    \left[ 1 - \frac{u^2}{2\kappa^2} + |E_{\kappa, \theta}(u)| \right]^n - \left[ 1 - \frac{u^2}{2\kappa^2}\right]^n \leq n |E_{\kappa, \theta}(u)| \left[ 1 - \frac{u^2}{2\kappa^2} + |E_{\kappa, \theta}(u)| \right]^{n - 1}. \label{eq: mean-value}
\end{equation}

Assume $|u| \geq U_{\kappa, \theta}$ with $U_{\kappa, \theta}$ defined in the statement of the lemma. Then
\begin{equation*}
    |E_{\kappa, \theta}(u)| \leq \frac{C_{\kappa,\theta}(u)}{\kappa^3} \leq \frac{u^2}{2\kappa^2},
\end{equation*}
so the right-hand side of \eqref{eq: mean-value} does not exceed $n |E_{\kappa, \theta}(u)| \leq \frac{2 C_{\kappa,\theta}(\kappa^{\theta})}{\kappa}$ for $|u| \leq \kappa^{\theta}$.

In case $|u| < U_{\kappa, \theta}$, we have
\begin{align*}
    \int_{|u| < U_{\kappa, \theta}} n e^{-u^2} |E_{\kappa, \theta}(u)| \left[ 1 - \frac{u^2}{2\kappa^2} + |E_{\kappa, \theta}(u)| \right]^{n - 1} du &\leq 2 n U_{\kappa, \theta} \max_{|u| < U_{\kappa, \theta}}|E_{\kappa, \theta}(u)(1 + 2n|E_{\kappa, \theta}(u)|)| \\
    &\leq \frac{4 U_{\kappa, \theta}C_{\kappa,\theta}(U_{\kappa, \theta})}{\kappa} \left( 1 + \frac{4 C_{\kappa,\theta}(U_{\kappa, \theta})}{\kappa} \right),
\end{align*}
where the inequality in the first line holds since $|E_{\kappa, \theta}(u)| \leq \frac{U_{\kappa, \theta}^2}{n} \leq \frac{1}{n}$ where $U_{\kappa, \theta}^2 \leq 1$ follows from the properties of the polynomial $P_{\kappa, \theta}$ listed above Lemma \ref{lem: lower bound up to k^theta}.
Consequently, the last result with \eqref{eq: int_small_u_1} imply
\begin{align*}
    &\int_{-\kappa^{\theta}}^{\kappa^{\theta}} e^{-u^2} \left[ 1 - \frac{u^2}{2\kappa^2} + E \right]^n du \nonumber\\
    &\geq \int_{-\kappa^{\theta}}^{\kappa^{\theta}} e^{-u^2} \left[ 1 - \frac{u^2}{2\kappa^2} \right]^n du - \frac{2 C_{\kappa,\theta}(\kappa^{\theta})}{\kappa} \int_{-\infty}^{\infty} e^{-u^2} du - \frac{4 U_{\kappa, \theta}C_{\kappa,\theta}(U_{\kappa, \theta})}{\kappa} \left( 1 + \frac{4 C_{\kappa,\theta}(U_{\kappa, \theta})}{\kappa} \right) \\
    &\geq \int_{-\kappa^{\theta}}^{\kappa^{\theta}} e^{-u^2} \left[ 1 - \frac{u^2}{2\kappa^2} \right]^n du - \frac{2 \sqrt{\pi} C_{\kappa,\theta}(\kappa^{\theta})}{\kappa} - \frac{4 U_{\kappa, \theta}C_{\kappa,\theta}(U_{\kappa, \theta})}{\kappa} \left( 1 + \frac{4 C_{\kappa,\theta}(U_{\kappa, \theta})}{\kappa} \right),
\end{align*}
where, to get to the last line, we used that $\int_{-\infty}^{\infty} e^{-u^2} du = \sqrt{\pi}$.
\end{proof}

\begin{cor} \label{cor: bound-sigma-n}
Keep the notations from Lemma \ref{lem: lower bound up to k^theta}. Then 
\begin{equation*}
    \frac{2^n n^{n/2} \sqrt{\pi}}{e^{n/2} (n+1)!} \Gamma\left( 1 + \frac{n}{2} \right) \sigma_n \geq f(\kappa, \theta),
\end{equation*}
where
\begin{equation}
    f(\kappa, \theta) = \int_{-\kappa^{\theta}}^{\kappa^{\theta}} e^{-u^2} \left[ 1 - \frac{u^2}{2\kappa^2} \right]^n du - \frac{2 \sqrt{\pi} C_{\kappa,\theta}(\kappa^{\theta})}{\kappa} - \frac{4 U_{\kappa, \theta}C_{\kappa,\theta}(U_{\kappa, \theta})}{\kappa} \left( 1 + \frac{4 C_{\kappa,\theta}(U_{\kappa, \theta})}{\kappa} \right) - 2 e^{-\kappa^{\theta}}.
\end{equation}
\end{cor}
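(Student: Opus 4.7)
The plan is to apply Lemma \ref{lem: lower bound up to k^theta} to the bulk region $|u| \leq \kappa^\theta$ of the Rogers representation \eqref{eq: sigma-n-expl} and then separately control the tail $|u| > \kappa^\theta$. Splitting the outer integral at $|u| = \kappa^\theta$, the lemma already delivers the first three terms of $f(\kappa,\theta)$ as a lower bound for the bulk. It therefore suffices to show that the absolute value of the tail integral is at most $2 e^{-\kappa^\theta}$, which is exactly the last subtracted term in $f(\kappa, \theta)$.

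For the tail estimate I would first establish the uniform bound $|B(u)| \leq 1$ for the bracket
\[
B(u) := 2\kappa e^{-iu/\kappa} \int_{0}^{\infty} e^{-y^2 - \nu y}\, dy, \qquad \nu = 2\kappa(1 - iu/\kappa).
\]
Since $|e^{-\nu y}| = e^{-2\kappa y}$ and $|2\kappa e^{-iu/\kappa}| = 2\kappa$, the triangle inequality yields
\[
|B(u)| \leq 2\kappa \int_{0}^{\infty} e^{-y^2 - 2\kappa y}\, dy \leq 2\kappa \int_{0}^{\infty} e^{-2\kappa y}\, dy = 1,
\]
hence $|B(u)^n| \leq 1$ uniformly in $u$. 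Because $\kappa \geq 24$ forces $\kappa^\theta \geq 1$ for every $\theta > 0$, we have $u^2 \geq |u|$ throughout the tail, so
\[
\left|\int_{|u| > \kappa^\theta} e^{-u^2} B(u)^n\, du\right| \leq 2 \int_{\kappa^\theta}^{\infty} e^{-u^2}\, du \leq 2 \int_{\kappa^\theta}^{\infty} e^{-u}\, du = 2 e^{-\kappa^\theta}.
\]

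Adding the lemma's lower bound for the bulk to this tail estimate produces the required lower bound on the full integral in \eqref{eq: sigma-n-expl}, and hence on $\sigma_n$ after dividing by the prefactor. I do not anticipate any serious obstacle here: the two new ingredients, the unit bound on $B(u)$ and the Gaussian tail inequality, are both direct triangle-inequality calculations. The only detail worth being careful about is that both the bulk and tail integrals are genuinely real, which follows from the symmetry $B(-u) = \overline{B(u)}$ implied by $\nu(-u) = \overline{\nu(u)}$, so that the splitting of the real integral \eqref{eq: sigma-n-expl} and the subsequent use of $|{\cdot}|$ on the tail are fully justified.
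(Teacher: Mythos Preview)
Your proposal is correct and follows essentially the same route as the paper: split the integral in \eqref{eq: sigma-n-expl} at $|u|=\kappa^\theta$, invoke Lemma~\ref{lem: lower bound up to k^theta} on the bulk, and bound the tail by $2e^{-\kappa^\theta}$ via the same chain $|B(u)|\le 2\kappa\int_0^\infty e^{-2\kappa y}\,dy=1$ and $e^{-u^2}\le e^{-u}$ for $u\ge \kappa^\theta>1$. Your remark on the reality of the bulk and tail integrals via $B(-u)=\overline{B(u)}$ is a nice clarification that the paper leaves implicit.
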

We note that $f(\kappa, \theta)$ increases in $\kappa$ for all $\kappa \geq 24$ and $0 < \theta < \frac{1}{3}$.
\begin{proof}
We combine \eqref{eq: sigma-n-expl}, Lemma \ref{lem: lower bound up to k^theta}, and the bound below for $|u| > \kappa^{\theta}$:
\begin{align*}
    \left| \int_{|u|>\kappa^{\theta}} e^{-u^2} \left[ 2 \kappa e^{-iu/\kappa} \int_{0}^{\infty} e^{-y^2 - 2 \kappa y + 2 i u y} dy \right]^n du \right| &\leq 2 \int_{\kappa^{\theta}}^{\infty} e^{-u^2} \left[ 2 \kappa \int_{0}^{\infty} e^{-2 \kappa y} dy \right]^n du \\
    &\leq 2 \int_{\kappa^{\theta}}^{\infty} e^{-u^2} du \\
    &\leq 2 \int_{\kappa^{\theta}}^{\infty} e^{-u} du = 2 e^{-\kappa^\theta},
\end{align*}
where in the last inequality we used that $u > \kappa^\theta > 1$ implying $e^{-u^2} \leq e^{-u}$.
\end{proof}

\section{Proof of Theorem \ref{thm: main}} \label{sec: main-thm-proof}

In this section, we prove Theorem \ref{thm: main}. To do that, we combine \eqref{eq: Lenstra-delta-2}, the bound for $\sigma_n$ from Corollary \ref{cor: bound-sigma-n}, and the conditional explicit bound for discriminants due to Poitou, cited below.

\begin{lem} \cite[(10)]{PoitouSurLesPetitsDiscrNov1976}
Assume GRH holds. Then
\begin{equation} \label{eq: lower-Poitou-explicit}
\frac{1}{n} \log |\Delta| \geq \gamma + \log 8 \pi + \frac{r}{n} \left( \frac{\pi}{2} - \frac{2 \pi^2}{\log^2 n} \beta(3) \right) - \frac{2 \pi^2}{\log^2 n} \left( \lambda(3) + \frac{8 + \frac{8}{n}}{\log n \left( 1 + \frac{\pi^2}{\log^2 n} \right)^2} \right),
\end{equation}
with
\begin{align}
    &\lambda(3) = \sum_{n \geq 0} \frac{1}{(2n+1)^3}= \frac{7}{8}\zeta(3),\\
    &\beta(3) = \sum_{n \geq 0} \frac{(-1)^n}{(2n+1)^3} = \frac{\pi^3}{32}.
\end{align}
\end{lem}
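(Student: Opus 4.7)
The plan is to invoke Weil's explicit formula for the Dedekind zeta-function $\zeta_K$ and then exploit GRH to drop a nonnegative contribution from the non-trivial zeros. For an even Schwartz-class test function $F$ with Mellin transform $\Phi(s)=\int_{-\infty}^{\infty}F(x)e^{(s-1/2)x}\,dx$, the explicit formula reads
\begin{equation*}
F(0)\log|\Delta| \;=\; \sum_{\rho}\Phi(\rho) \;-\; 2\sum_{\mathfrak{p},\,k\geq 1}\frac{\log N\mathfrak{p}}{N\mathfrak{p}^{k/2}}F(k\log N\mathfrak{p}) \;+\; r\cdot I_{\RR}(F) \;+\; 2s\cdot I_{\CC}(F),
\end{equation*}
where $I_{\RR}(F)$ and $I_{\CC}(F)$ are integrals of $F$ against the logarithmic derivatives of the local $\Gamma$-factors at real and complex places respectively, and $\rho$ ranges over the non-trivial zeros of $\zeta_K$.

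Under GRH every non-trivial zero has the form $\rho=1/2+i\gamma_\rho$ with $\gamma_\rho\in\RR$, so $\Phi(\rho)$ reduces (up to normalisation) to $\hat F(\gamma_\rho)$. Next I would choose $F\geq 0$ with $\hat F\geq 0$ on $\RR$; a convenient parametric family is a $\mathrm{sech}^2$-type profile (or a compactly supported autoconvolution) indexed by a scale $\lambda>0$ to be tuned later. For such $F$ both the zero sum and the prime-power sum are nonnegative, so dropping them yields
\begin{equation*}
F(0)\log|\Delta| \;\geq\; r\cdot I_{\RR}(F) \;+\; 2s\cdot I_{\CC}(F).
\end{equation*}

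The heart of the argument is to expand the archimedean kernels via the Weierstrass series $\frac{\Gamma'}{\Gamma}(s)=-\gamma+\sum_{m\geq 0}\bigl(\frac{1}{m+1}-\frac{1}{m+s}\bigr)$, evaluate $I_{\RR}(F)$ and $I_{\CC}(F)$ asymptotically in $\lambda$, and track every constant exactly rather than with an $O(\cdot)$. The $\lambda\to\infty$ limit of the archimedean integrals produces the main terms $\gamma+\log 8\pi$ together with the real-place correction $\frac{r}{n}\cdot\frac{\pi}{2}$, while the subleading corrections of order $\lambda^{-2}$ are computed from Fourier evaluations of $\tanh$-type kernels at odd integers, which is precisely where the Dirichlet values $\lambda(3)=\tfrac{7}{8}\zeta(3)$ and $\beta(3)=\pi^3/32$ enter. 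The remaining contribution of size $(8+8/n)/(\log n\,(1+\pi^2/\log^2 n)^2)$ comes from summing the tail of the $\Gamma$-expansion in closed form. This bookkeeping of exact constants is the main technical obstacle.

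Finally I would optimise by taking $\lambda \asymp \log n$, which converts the $\lambda^{-2}$ corrections into the factor $\frac{2\pi^2}{\log^2 n}$ appearing on the right of \eqref{eq: lower-Poitou-explicit}, and makes the tail term of order $1/\log^3 n$ as stated. Dividing by $nF(0)=n$ and using $r+2s=n$ yields precisely the inequality \eqref{eq: lower-Poitou-explicit}.
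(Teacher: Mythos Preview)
The paper does not prove this lemma at all: it is quoted directly as equation~(10) of Poitou's 1976 paper and used as a black box in the proof of Theorem~\ref{thm: main}. So there is no ``paper's own proof'' to compare against; you are effectively sketching Poitou's original argument.

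Your outline is the right method (Weil explicit formula for $\zeta_K$, GRH to make the zero sum nonnegative, a $\mathrm{sech}^2$-type test function scaled by $\lambda\asymp\log n$), but the sketch has three concrete bookkeeping errors that would need fixing before it becomes a proof.

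First, your explicit formula omits the pole contributions $\Phi(0)+\Phi(1)$ coming from the simple pole of $\zeta_K$ at $s=1$ and the simple zero at $s=0$. These terms are not negligible and are exactly what produces the $\dfrac{8+8/n}{\log n\,(1+\pi^2/\log^2 n)^2}$ correction; it does \emph{not} come from ``summing the tail of the $\Gamma$-expansion in closed form'' as you write. For Poitou's specific test function one computes $\Phi(0)=\Phi(1)$ explicitly, and after dividing by $nF(0)$ this pair of pole terms yields that expression.

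Second, the signs in your displayed explicit formula are arranged so that the conclusion you draw from it is not valid: from $F(0)\log|\Delta|=\text{[zeros]}-\text{[primes]}+\text{[arch]}$ with both bracketed sums nonnegative, one \emph{cannot} deduce $F(0)\log|\Delta|\geq\text{[arch]}$. In Poitou's (and Odlyzko's) normalisation the formula is arranged so that the zero sum and the prime-power sum both sit with a $+$ sign on the side of $\log|\Delta|$, and the pole terms sit with a $-$ sign; only then does dropping the two nonnegative sums give a lower bound.

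Third, the main archimedean term is not simply $rI_{\RR}(F)+2sI_{\CC}(F)$ producing $\gamma+\log 8\pi$ in the limit; rather the constant $\gamma+\log 8\pi$ is the combination $\gamma+\log 2\pi$ from each of the $n=r+2s$ archimedean places plus an extra $\log 2$ per real place, and the $\dfrac{r}{n}\cdot\dfrac{\pi}{2}$ and the $\beta(3)$, $\lambda(3)$ corrections then separate according to real versus complex places. Your sketch collapses this distinction.

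None of this is a wrong strategy --- it is exactly Poitou's --- but the details you flag as ``the main technical obstacle'' are also the places where your write-up currently goes astray.
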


\begin{proof}[Proof of Theorem \ref{thm: main}]
The inequality \eqref{eq: Lenstra-delta-2} is equivalent to
\begin{equation*}
    \frac{1}{n} \log|\Delta| \leq \log (n \pi) - \frac{2}{n} \log \left( \sigma_n \Gamma \left( 1 + \frac{n}{2} \right) \right),
\end{equation*}
which by Corollary \ref{cor: bound-sigma-n} with $n = 2\kappa^2$ and \cite{Robbins1955} implies
\begin{align*}
    \frac{2}{n} \log \left( \sigma_n \Gamma \left( 1 + \frac{n}{2} \right) \right) &\geq \frac{2}{n} \log f(\kappa, \theta) - 2 \log 2 - \log n - \frac{\log \pi}{n} + 1 + \frac{2}{n} \log ((n+1)!)\\
    &\geq -\log n + 1 - 2 \log 2 + \frac{2}{n} \log f(\kappa, \theta) - \frac{\log \pi}{n} \\ 
    &+ \frac{2}{n} \left( \frac{\log (2 \pi)}{2} + \left( n + \frac{3}{2} \right) \log (n+1) - n - 1 + \frac{1}{12n + 1} \right).
\end{align*}
We could have used \cite{Maria1965} for a better lower bound on $(n+1)!$ but the bound in \cite{Robbins1955} suffices for our purposes.
Hence, we have
\begin{align*}
    \frac{1}{n} \log|\Delta| &< \log (n \pi) + \log n - 1 + 2 \log 2 - \frac{2}{n} \log f(\kappa, \theta) + \frac{\log \pi}{n} \\ 
    &- \frac{\log (2 \pi)}{n} - \frac{2n+3}{n} \log (n+1) + \frac{2n+2}{n} - \frac{2}{n(12n + 1)} \\
    &< 2 \log n + \log \pi + 2 \log 2 - 1 - \frac{2}{n} \log f(\kappa, \theta) - \frac{\log 2}{n} \\&- 2 \log n - 3 \frac{\log n}{n} + 2 + \frac{2}{n} - \frac{2}{n(12n + 1)}\\
    &< \log (4 \pi e) - 3 \frac{\log n}{n} + \frac{2 - \log 2 - 2 \log f(\kappa, \theta)}{n} - \frac{2}{n(12n+1)}.
\end{align*}

By comparing this bound with \eqref{eq: lower-Poitou-explicit}, we get the inequality
\begin{align*}
    \gamma + \log 2 - 1 &< \frac{2 \pi^2}{\log^2 n} \left( \lambda(3) + \frac{8 + \frac{8}{n}}{\log n \left( 1 + \frac{\pi^2}{\log^2 n} \right)^2} \right) - \frac{r}{n} \left( \frac{\pi}{2} - \frac{2 \pi^2}{\log^2 n} \beta(3) \right) \\
    &- \frac{3 \log n}{n} + \frac{2 - \log 2 - 2 \log f(\kappa, \theta)}{n} - \frac{2}{n(12n+1)},
\end{align*}
which no longer holds for $\theta = 0.1$, all $n = 2\kappa^2 \geq 62238$, and all $0 \leq r \leq n$. We approximated $f(\kappa, \theta)$ in SageMath 10.4; in particular, $f(\sqrt{62238/2}, 0.1) \geq 0.484$ leads to the failure of the inequality above.
\end{proof}

\section{Lenstra's criterion and lower bounds for Dedekind zeta functions} \label{sec: unconditional}
In this section, we prove Lemma \ref{lem:lower-M-zeta-K}, Theorem \ref{thm: zeta-Lenstra} and Lemma \ref{lem: gen-zeta-Lenstra}. To do that, we will use that $M \leq \min_{(0) \subsetneq I \subsetneq \cO_K} N(I)$, where $I$ denotes a proper ideal in $\cO_K$, and $N(I) = \#|\cO_K / I|$ is the norm of $I$ (see \cite[(2.2)]{Lenstra1977Euclidean}).

In \cite{Zimmert1981}, Zimmert showed that at least half of the ideal classes in $K$ contain an ideal $I \subset \cO_K$ such that
\begin{equation*}
    N(I) \leq (4\pi e^{1 + \gamma} + o(1))^{-r/2} (4\pi e^{\gamma} + o(1))^{-s} \sqrt{|\Delta|} \quad \text{as} \quad n \to \infty.
\end{equation*}
Zimmert's result does not imply a bound on $M$ since $I$ may coincide with $\cO_K$. However, we can adapt his argument to prove Lemma \ref{lem:lower-M-zeta-K}.

\begin{proof}[Proof of Lemma \ref{lem:lower-M-zeta-K}]
We use \cite[Satz 4 and Korollar 1]{Zimmert1981} with $f = g = \zeta_K$, $a = r+s$, $b = s$, and $A = \sqrt{|\Delta|} \pi^{-n/2}$: for $0 < \beta < \frac{1}{4}$,
\begin{equation} \label{eq: bound-F-r+s-s<zeta}
    \frac{F_{r+s,s}(\beta)}{2} \leq \frac{\zeta'_K}{\zeta_K}(1 + \beta) + \log \sqrt{|\Delta|} - \frac{n}{2} \log \pi,
\end{equation}
where
\begin{equation}
    F_{a,b}(\beta) = a \left( F_1(\beta) + f_1(\beta)\right) + b \left( F_2(\beta) + f_2(\beta)\right) + F_3(\beta),
\end{equation}
with
\begin{align*}
F_1(\beta) =& \sum_{\ell = 1}^{\infty} \left[ \frac{2}{1+2\beta} \left( \frac{\Gamma'}{\Gamma}\left( \frac{2\ell + 3\beta}{2+4\beta} \right) - \frac{\Gamma'}{\Gamma}\left( \frac{2\ell - 1 + \beta}{2+4\beta} \right) \right) - \frac{1}{2\ell - 2 - \beta} - \frac{1}{2\ell - 1 + \beta}\right] \\
f_1(\beta) =& - \frac{1}{2} \left( \frac{\Gamma'}{\Gamma}\left( \frac{1 + \beta}{2} \right) + \frac{\Gamma'}{\Gamma}\left( -\frac{\beta}{2} \right) \right),\\
F_2(\beta) =& \sum_{\ell = 1}^{\infty} \left[ \frac{2}{1+2\beta} \left( \frac{\Gamma'}{\Gamma}\left( \frac{2\ell + 1 + 3\beta}{2+4\beta} \right) - \frac{\Gamma'}{\Gamma}\left( \frac{2\ell + \beta}{2+4\beta} \right) \right) - \frac{1}{2\ell - 1 - \beta} - \frac{1}{2\ell + \beta}\right], \\
f_2(\beta) =& - \frac{1}{2} \left( \frac{\Gamma'}{\Gamma}\left( 1 + \frac{\beta}{2} \right) + \frac{\Gamma'}{\Gamma}\left( \frac{1-\beta}{2} \right) \right)\\
F_3(\beta) =& - \frac{4}{\beta} + \frac{2}{1 + 2\beta} \left( \frac{\Gamma'}{\Gamma}\left( \frac{1 + \beta}{2+4\beta} \right) - \frac{\Gamma'}{\Gamma}\left( \frac{1 + 3\beta}{2+4\beta} \right) + \frac{\Gamma'}{\Gamma}\left( \frac{2 + 5\beta}{2+4\beta} \right) - \frac{\Gamma'}{\Gamma}\left( \frac{2 + 3\beta}{2+4\beta} \right) \right).
\end{align*}

Let us show that the function $F_1(s) + f_1(s)$ is holomorphic in the region $R = \{s \in \CC, |s| < \frac{1}{4}\}$. All terms in the definition of $F_1(s)$ corresponding to $\ell \geq 2$ are holomorphic in $R$. Thus, the sum for $\ell \geq 2$ uniformly converges to a holomorphic function in $R$. The term corresponding to $\ell = 1$ has a single simple pole at $\beta = 0$ which cancels out with the pole of $f_1(s)$. Hence, $F_1(s) + f_1(s)$ is holomorphic in the desired region. Similarly, $F_2(s) + f_2(s)$ is also holomorphic in the same region. The choice of the region $R$ ensures that all denominators and arguments inserted in $\frac{\Gamma'}{\Gamma}$ are isolated from $0$, and could be enlarged if needed, but is sufficient for our purposes.

Hence, $F_1(s) + f_1(s) = F_1(0) + f_1(0) + O(s)$ and $F_2(s) + f_2(s) = F_2(0) + f_2(0) + O(s)$ uniformly in $R$, from where $F_{a,b}(\beta) - F_3(\beta) = a(F_1(0) + f_1(0)) + b(F_2(0) + f_2(0)) + O((a+b)\beta)$ uniformly for all $a,b$ and $|\beta| \leq \frac{1}{4}$. Moreover, $F_3(\beta) = O \left( \frac{1}{\beta}\right)$ for $0 < |\beta| \leq \frac{1}{4}$, from where we get
\begin{align}
    F_{a,b}(\beta) &= a(F_1(0) + f_1(0)) + b(F_2(0) + f_2(0)) + O((a+b)\beta) + O\left( \frac{1}{\beta} \right) \nonumber\\
    &= a (\gamma + \log 4 + 1) + b (\gamma + \log 4 - 1) + O((a+b)\beta) + O\left( \frac{1}{\beta} \right) \quad \text{as} \quad \beta \to 0, \label{eq: estimate-F-a-b}
\end{align}
where the second line follows from the proof of \cite[Korollar 2]{Zimmert1981}. Combining \eqref{eq: estimate-F-a-b} and \eqref{eq: bound-F-r+s-s<zeta} gives
\begin{equation} \label{eq: bound-Zimmert-1}
    \frac{\zeta'_K}{\zeta_K}(1 + \beta) + \log \sqrt{|\Delta|} \geq \frac{r}{2} (\gamma + \log (4\pi) + 1) + s (\gamma + \log (4\pi)) + O(n \beta) + O\left( \frac{1}{\beta}\right).
\end{equation}

Now we proceed as in \cite{Zimmert1981} but with a slight modification: the author uses that $\frac{\zeta'_K}{\zeta_K}(1 + \gamma) \leq -\log \left(\min_{(0) \subsetneq I} N(I)\right)$ for any $\gamma > 0$. We provide a similar bound involving $\min_{(0) \subsetneq I \subsetneq \cO_K} N(I)$,
$$-\zeta'_K(1 + \gamma) = \sum_{(0) \subsetneq I \subsetneq \cO_K} N(I) \frac{\log N(I)}{N(I)^{1 + \gamma}} \geq \log \left(\min_{(0) \subsetneq I \subsetneq \cO_K} N(I)\right) \cdot (\zeta_K(1 + \gamma) - 1),$$
whence
\begin{align*}
    \frac{\zeta'_K}{\zeta_K}(1 + \gamma) \leq -\log \left(\min_{(0) \subsetneq I \subsetneq \cO_K} N(I)\right) \cdot \left(1 - \frac{1}{\zeta_K(1 + \gamma)}\right),
\end{align*}
which, combined with \eqref{eq: bound-Zimmert-1}, concludes the proof of the lemma.
\end{proof}

Now we are ready to prove Theorem \ref{thm: zeta-Lenstra}.
\begin{proof}[Proof of Theorem \ref{thm: zeta-Lenstra}]
Assume $n$ is such that \eqref{eq: Lenstra-delta-2} is true, then
\begin{equation} \label{eq: lower-bound-M}
\log M > \log \sigma_n + n \log 2 - \frac{n}{2} \log \pi - \frac{n}{2} \log n + \log \Gamma \left( 1 + \frac{n}{2} \right) + \log \sqrt{|\Delta|}.
\end{equation}
As we mentioned before, $\sigma_n \sim \frac{n}{e} \cdot 2^{-n/2}$ \cite{Rogers1958}. By combining this asymptotic with Stirling's formula for $\Gamma$, the equation above implies
\begin{equation*}
    \log M > - \frac{n}{2} (\log \pi + 1) + \frac{3}{2} \log n + \frac{\log \pi}{2} - 1 + \log \sqrt{|\Delta|} + o(1).
\end{equation*}
From Lemma \ref{lem:lower-M-zeta-K} and the observation $M \leq \min_{(0) \subsetneq I \subsetneq \cO_K} N(I)$, we get
\begin{equation} \label{eq: lower-M-zeta-K}
\frac{\log M}{\zeta_K(1 + n_K^{-\ep})} \geq \log M + \frac{r}{2}(\log (4\pi) + 1 + \gamma) + s(\log (4\pi) + \gamma) - \log \sqrt{|\Delta|}
+ O(n^{1-\ep}) + O\left( n^{\ep}\right) \quad \text{as} \quad n \to \infty,
\end{equation}
where we took $\beta(n) = n^{\ep}$ for some $\ep$ satisfying $0 < \ep < 1$.

From \eqref{eq: lower-bound-M} and \eqref{eq: lower-M-zeta-K}, we have
\begin{align*}
    \frac{\log M}{\zeta_K(1 + n^{-\ep})} &> \frac{r}{2}(\log 4 + \gamma) + s(\log 4 + \gamma - 1) + O(n^{1 - \ep}) + o(1)\\
    &\geq \frac{n}{2} (\log 4 + \gamma - 1) + O(n^{1-\ep}) + O(n^{\ep}),
\end{align*}
and thus using $M \leq 2^n$ we obtain
\begin{equation*}
    \zeta_K(1 + n^{-\ep}) < \frac{\log M}{\frac{n}{2}(\log 4 + \gamma - 1) + O(n^{1 - \ep}) + o(1)} \leq \frac{2\log 2}{2\log 2 + \gamma - 1 + O(n^{-\ep}) + O(n^{\ep})},
\end{equation*}
which contradicts \eqref{eq: zeta-K-lower-uniform} for all $n$ large enough.
\end{proof}

As we mentioned in Section \ref{sec: intro}, Theorem \ref{thm: zeta-Lenstra} can be adjusted to other upper bounds for the sphere packings. Namely, the Rogers upper bound $\sigma_n$ satisfies
\begin{equation} \label{eq: cor-zeta-Lenstra-assume}
    \frac{1}{n} \log \sigma_n \sim - 0.5 \cdot \log 2 \quad \text{as} \quad n \to \infty,
\end{equation}
whereas currently the best bound is due to Kabatiansky and Levenshtein \cite{KabLev1978}:
\begin{equation*}
    \frac{1}{n} \log \sigma_n^{\text{KL}} \sim - 0.5990 \cdot \log 2 \quad \text{as} \quad n \to \infty.
\end{equation*}

The statement of Theorem \ref{thm: zeta-Lenstra} can be generalised as follows.

\begin{lem} \label{lem: gen-zeta-Lenstra}
Assume $p_n$, $\frac{1}{n} \log p_n \sim - C$ as $n \to \infty$, is an upper bound for sphere packings used for testing the norm-Euclideanity via Lenstra's criterion
\begin{equation} \label{eq: Lenstra-gen-sphere-packing}
    \log M > p_n \left( \frac{4}{\pi n} \right)^{n/2} \Gamma\left( 1 + \frac{n}{2} \right).
\end{equation}
Assume $2C > 1 - \gamma + 3 \log 2$, and for some $0 < \ep < 1$,
\begin{equation*}
\liminf_{n \to \infty} \zeta_K\left( 1 + n^{-\ep} \right) > \frac{2\log 2}{3 \log 2 + \gamma - 1 - 2C}.
\end{equation*}
Then \eqref{eq: Lenstra-gen-sphere-packing} implies $n$ is bounded.
\end{lem}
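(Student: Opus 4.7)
\begin{skproof}
The plan is to follow the proof of Theorem \ref{thm: zeta-Lenstra} almost line-for-line, with the generic asymptotic $\frac{1}{n}\log p_n \sim -C$ taking over the role of the Rogers value $\frac{1}{n}\log\sigma_n \sim -\frac{\log 2}{2}$. First I would take logarithms in \eqref{eq: Lenstra-gen-sphere-packing} (read as $M > p_n (4/(\pi n))^{n/2}\Gamma(1+n/2)\sqrt{|\Delta|}$), apply Stirling's formula to $\log\Gamma(1+n/2)$, and substitute the asymptotic for $p_n$. This should yield
\[
\log M > -\frac{n}{2}\bigl(2C + \log(\pi/2) + 1\bigr) + \log\sqrt{|\Delta|} + o(n),
\]
which recovers the bound $-\frac{n}{2}(\log\pi + 1) + \log\sqrt{|\Delta|} + o(n)$ used in the proof of Theorem \ref{thm: zeta-Lenstra} upon specialising to $2C = \log 2$.

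Next I would apply Lemma \ref{lem:lower-M-zeta-K} with $\beta(n) = n^{-\ep}$ together with the inequality $M \leq N(I_0)$; since $0 < \ep < 1$, the error terms $O(n\beta(n))$ and $O(1/\beta(n))$ are both $o(n)$. Subtracting the resulting upper bound on $\log M \cdot (1 - 1/\zeta_K(1+n^{-\ep}))$ from the previous lower bound on $\log M$ cancels the dominant $\log\sqrt{|\Delta|}$, and after the simplification $\log 4\pi - \log(\pi/2) = 3\log 2$ I expect to arrive at
\[
\frac{\log M}{\zeta_K(1+n^{-\ep})} > \frac{r}{2}(3\log 2 + \gamma - 2C) + s(3\log 2 + \gamma - 2C - 1) + o(n).
\]

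Writing $r = n - 2s$ turns the right-hand side into $\frac{n}{2}(3\log 2 + \gamma - 2C) - s$, a linear function of $s$ on $[0, n/2]$ with slope $-1$, so its minimum is attained at $s = n/2$, $r = 0$ and equals $\frac{n}{2}(3\log 2 + \gamma - 1 - 2C)$. The hypothesis on $C$ is exactly what ensures this coefficient has the correct sign to make the contradiction non-vacuous, playing the role of $2\log 2 + \gamma - 1 > 0$ in the Rogers case. Dividing by $\log M \leq n\log 2$ and letting $n \to \infty$ would then give
\[
\zeta_K(1+n^{-\ep}) < \frac{2\log 2}{3\log 2 + \gamma - 1 - 2C} + o(1),
\]
in contradiction with the hypothesised lower bound for all $n$ sufficiently large, so $n$ must be bounded.

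The only real obstacle is bookkeeping: verifying uniform $o(n)$ control on all error terms coming from $p_n$, Stirling, and Lemma \ref{lem:lower-M-zeta-K}, and carrying out the $(r,s)$-minimisation with the correct sign. No new analytic input is needed beyond what is already assembled for Theorem \ref{thm: zeta-Lenstra}; the lemma reduces to isolating where the Rogers value $-\tfrac{1}{2}\log 2$ enters that proof and replacing it by $-C$ throughout.
\end{skproof}
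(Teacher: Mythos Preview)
Your proposal is correct and is precisely the approach the paper intends: the paper gives no separate proof for Lemma~\ref{lem: gen-zeta-Lenstra}, presenting it immediately after the proof of Theorem~\ref{thm: zeta-Lenstra} as the evident generalisation obtained by replacing the Rogers asymptotic $\frac{1}{n}\log\sigma_n\sim-\tfrac{1}{2}\log 2$ by $\frac{1}{n}\log p_n\sim -C$. You carry out exactly that substitution, and your intermediate computations (the Stirling step, the cancellation of $\log\sqrt{|\Delta|}$ via Lemma~\ref{lem:lower-M-zeta-K}, the identity $\log(4\pi)-\log(\pi/2)=3\log 2$, and the $(r,s)$-minimisation) are all correct. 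You also rightly read \eqref{eq: Lenstra-gen-sphere-packing} as $M > p_n(4/\pi n)^{n/2}\Gamma(1+n/2)\sqrt{|\Delta|}$; the $\log$ and the missing $\sqrt{|\Delta|}$ in the displayed statement are typos.
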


We note that the Kabatiansky--Levenshtein bound would still contradict the conditional lower bound \eqref{eq: Odlyzko-GRH-lower} for $n$ large enough. Providing the explicit lower bound for the Kabatiansky--Levenshtein (or the more generalised bound, see \cite{CohnZhao2014} and \cite{CohnElkies2003}) could lead to the analogue of Theorem \ref{thm: main} where \eqref{eq: Lenstra-particular-sphere} would be replaced by a weaker inequality.

\section*{Acknowledgements}

We would like to thank Timothy Trudgian for his suggestion to look into this problem and his continuous support during this project. We would also like to thank Aleksander Simoni\v{c}, Bryce Kerr, John Voight, Felipe Voloch, Brendan Creutz, Steven Galbraith, and Igor Shparlinski for the helpful discussions. We would like to thank Gustav Bagger for computations provided for the cyclotomic number fields which supported the statement of Theorem \ref{thm: zeta-Lenstra}.

\section{Appendix} \label{sec: app}

\subsection{Plots for cyclotomic number fields} \label{subsec: app-plots}

Let $K_m = \QQ[e^{2 \pi i/m}]$ be a cyclotomic number field of degree $n_m = \varphi(m)$. Let $\ep \in \left\{ \frac{1}{4}, \frac{1}{2}, \frac{3}{4} \right\}$. We plot the points $\left(n_m, ~\zeta_{K_m}\left( 1 + n_m^{-\ep}\right) \right)$ with $2 \leq m \leq 350$.

\begin{figure}
    \centering
    \includegraphics[scale = 0.43]{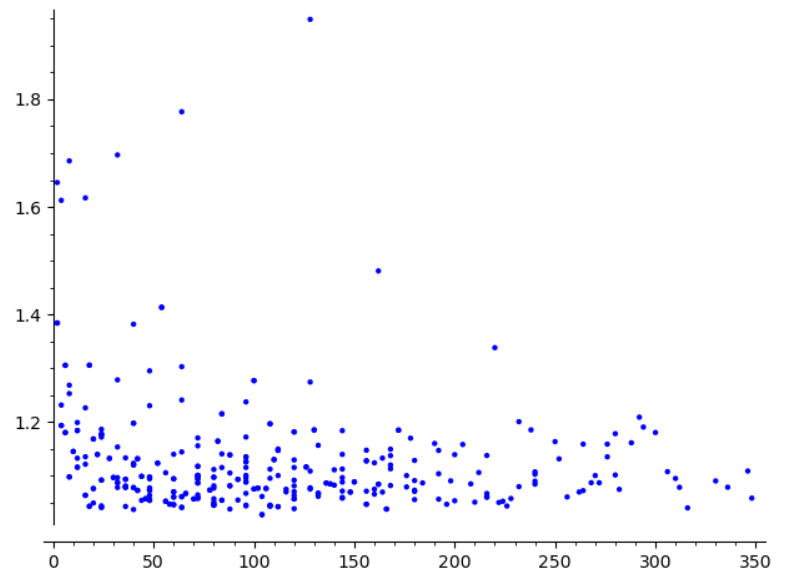}
    \caption{$\ep = \frac{1}{4}$}
    \label{fig:1-4}
\end{figure}

\begin{figure}
    \centering
    \includegraphics[scale = 0.43]{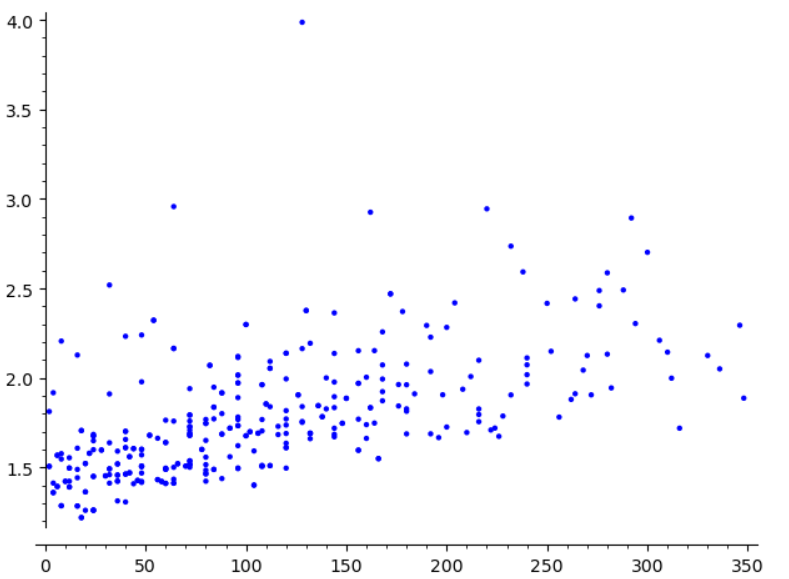}
    \caption{$\ep = \frac{1}{2}$}
    \label{fig:1-2}
\end{figure}

\begin{figure}
    \centering
    \includegraphics[scale = 0.43]{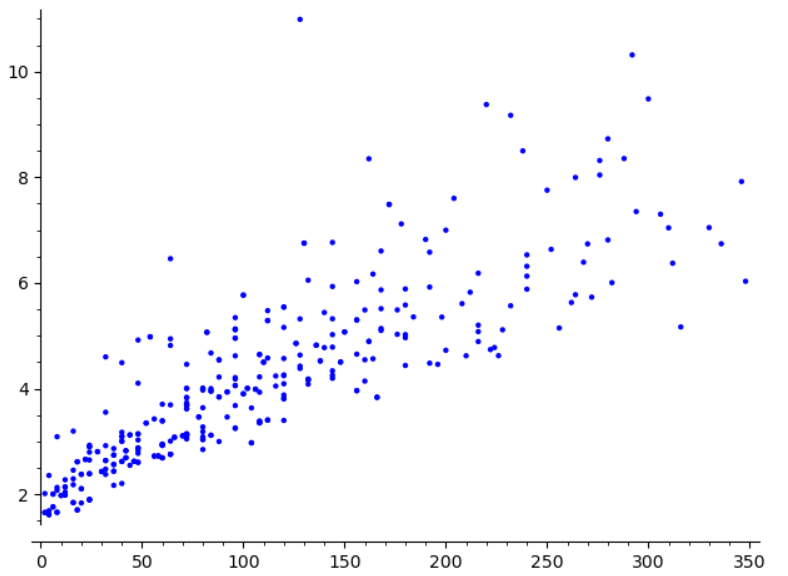}
    \caption{$\ep = \frac{3}{4}$}
    \label{fig:2-4}
\end{figure}

\subsection{Concerning lower bounds for Dedekind zeta functions} \label{subsec: app-questions}

We conclude with the following questions connected to Theorem \ref{thm: zeta-Lenstra} and plots from the previous section.

\begin{question}
\textnormal{Let $(K_m)_{m \in \NN}$ be a sequence of number fields such that $n_m := \deg K_m \to \infty$ as $m \to \infty$. Do there exist constants $0 < \ep < 1$ and $B > 1$ such that
\begin{equation*}
\liminf_{m \to \infty} \zeta_{K_m}\left( 1 + n_m^{-\ep} \right) \geq B?
\end{equation*}}
\end{question}

A tower of number fields may be a particular (and potentially easier) choice of the sequence.

\begin{question}
\textnormal{Let $(K_m)$ be a tower of number fields with $n_m := \deg K_m$. What is the behaviour of $\zeta_{K_m}(s)$ where
\begin{itemize}
    \item $s = 1 + \delta$, with $\delta$ a positive constant;
    \item $s = 1 + n_m^{-\ep}$, with $\ep$ a positive constant;
    \item $s = 1 + g(n_m)$, with $g(t) > 0$, $g(t) \to 0$ as $t \to \infty$?
\end{itemize}}
\end{question}

\bibliographystyle{apalike}
\bibliography{refs.bib}

\end{document}